\newtheorem{propo}{Proposition}[section]
\newtheorem{defi}[propo]{Definition}
\newtheorem{lemma}[propo]{Lemma}
\newtheorem{corol}[propo]{Corollary}
\newtheorem{theo}[propo]{Theorem}
\newtheorem{rem}[propo]{Remark}
\theoremstyle{definition}
\newtheorem{remk}[propo]{Remark}
\newcommand{\ld}{,\ldots ,}
\newcommand{\ra}{ \rightarrow }
\newcommand{\diag}{\mathop{\rm diag}\nolimits}
\newcommand{\Om}{\Omega}
\newcommand{\ZZ}{\mathop{\Bbb Z}\nolimits}
\newcommand{\al}{\alpha}
\newcommand{\be}{\beta}
\newcommand{\ep}{\varepsilon}
\newcommand{\lam}{\lambda }
\newcommand{\up}{^{-1}}
\def\11{$(1)$}
\def\22{$(2)$}
\def\33{$(3)$}
\def\d12{{_{12}}}
\def\ei{{eigenvalue }}
\def\eis{{eigenvalues }}
\def\f{{following }}
\def\ii{{if and only if }}
\def\ir{{irreducible }}
\def\irr{{irreducible representation }}
\def\itf{{It follows that }}
\def\mult{{multiplicity }}
\def\rep{{representation }}
\def\reps{{representations }}
\def\St{{Suppose that }}
\def\SL{{\rm SL}}
\def\SO{{\rm SO}}
\def\GL{{\rm GL}}
\def\Sp{{\rm Sp}}
\newcommand{\el}{\end{lemma}}
\newcommand{\om}{\omega }
\newcommand{\med}{\medskip }
\def\ag{algebraic group }
\def\hw{highest weight }
\newcommand{\bp}{\begin{proof}}
\newcommand{\enp}{\end{proof}}
\newcommand{\bl}{\begin{lemma}\label}
\begin{document}

\title[Almost cyclic regular elements]{ Almost cyclic regular elements in irreducible representations of simple algebraic groups}
\author{D.M. Testerman and A.E. Zalesski}
\address{Donna Testerman, Ecole Polytechnique F\'ed\'eral de Lausanne,
\newline
\hskip1cm Institute of Mathematics, Station 8, CH-1015 Lausanne, Switzerland\newline
e-mail: donna.testerman@epfl.ch}

\address{Alexandre E. Zalesski, Department of Physics, Mathematics and Informatics,\newline
Academy of Sciences of Belarus, 66 Prospekt Nezalejnasti, Minsk, 220000, Belarus\newline
e-mail: alexandre.zalesski@gmail.com}
\keywords{simple algebraic groups, representations, eigenvalue multiplicities, regular elements}
\subjclass{20G05}
\thanks{The first author acknowledges the support of the Swiss National Science Foundation, research grant number $200020\textunderscore175571$. In addition, this work was initiated while the second author visited the EPFL; he gratefully acknowledges this institute's support.}
\maketitle

{\it Abstract} Let $G$ be a simple linear algebraic group  defined over an algebraically closed field of characteristic $p\geq 0$ and
let $\phi$ be a $p$-restricted irreducible representation of $G$. Let  $T$ be  a maximal torus of $G$ and $s\in T$.
 We say that $s$ is \emph{strongly regular} if $\al(s)\neq \be(s)$ for all distinct $T$-roots $\alpha$ and $\beta$ of $G$.
 Our main result states that if all but one of the \eis of $\phi(s)$ are of multiplicity 1 then, with a few specified exceptions, $s$ is
 strongly regular. This  can be viewed as an extension of
 our earlier result saying that under the same hypotheses, $s$ must be regular and all non-zero weights of $\phi$ are of \mult 1.

\def\ss{semisimple }

\section{Introduction}

Fix an algebraically closed field $F$ of characteristic $p\geq 0$, let $G$ be a simple linear \ag defined over $F$ (hereafter referred to as an
\emph{algebraic group}).   Let $\phi:G\ra \GL(V)$
be a rational \ir representation (so $V$ is a finite-dimensional
$FG$-module).
Recall that $g\in G$ is said to be \emph{regular} if $\dim C_G(g)={\rm rank}(G)$; so in particular, for a semisimple element $s\in G$,
$s$ is regular if and only if $C_G(s)^\circ$ is a maximal torus of $G$. A starting point of our project is the following observation:

\med
\noindent $(*)$ Let $s\in G$ be semisimple. If $\phi(s)$ is a regular element in $ \GL(V)$, then $s$ is regular in $G$.

\med
Note that the assumption on $\phi(s)$ in $(*)$ is equivalent to saying that all \ei multiplicities of $\phi(s)$
are equal to 1.

Let $T$ be a maximal torus of $G$. As $T$ meets every conjugacy class of \ss elements of $G$, for our considerations, we can assume that $s\in T$.
Recall that  the roots of $G$ with respect to $T$ are the non-trivial \ir constituents of the restriction
${\rm Ad}|_T$, where ${\rm Ad}$ is the adjoint \rep of $G$. Then $s$ is regular \ii $\al(s)\neq 1$, for every root $\al$ of the
root system of $G$, see \cite[Ch. III,  \S 1, Corollary 1.7]{Spr}. We generalize this in the following
\begin{defi} Let $s\in T$. We say that $s$ is {\it strongly regular} if
  $\al(s)\neq \be(s)$ whenever $\al\neq \be$ are $T$-roots of $G$.
\end{defi}

A strongly regular element is regular as $\al(s)=1$ implies $(-\al)(s)=1$; however,
there are many elements in $T$ that are regular but not strongly regular. Our first main result enforces $(*)$ as follows:

\begin{theo}
\label{th1}
 Let $G$ be a simply connected simple  algebraic group   defined over $F$. Let $s\in G$ be a semisimple element and
let $\phi $ be an \ir representation of $G$ with $p$-restricted \hw $\om\neq 0$.
Suppose that all of the \eis of $\phi(s)$ are of \mult $1$.
Then
one of
the following holds:
\begin{enumerate}[]

\item{\rm{(1)}} the element $s$ is strongly regular;

\item{\rm {(2)}} $G=A_n$, $n\geq 1$, with $p\neq 2$ if $n=1$, and $\om\in\{\om_1\ld \om_{n}\}$;
\item{\rm{(3)}}  $G=B_n$, $n\geq 3$, and  $\om\in\{\om_1,\om_n\}$;
  \item{\rm{(4)}} $G=C_n$, $n\geq  2$, $p= 2$,  and
 $\om\in\{\om_1,\om_n\}$;
\item{\rm{(5)}}  $G=C_n$, $n\geq 2$, $p\neq 2$, and either\begin{enumerate}[]
  \item{\rm{(a)}} $\om=\om_1$, or
  \item{\rm{(b)}} $n\in\{2,3\}$ and $\om=\om_n$, or
  \item{\rm{(c)}} $n\geq 3$, $p=3$, and $\om\in\{\om_{n-1},\om_n\}$;
    \end{enumerate}

\item{\rm{(6)}}  $G=D_n$, $n\geq 4,$ and $\om\in\{\om_1,\om_{n-1},\om_n\}$;
\item{\rm{(7)}} $G=E_6$ and  $\om\in\{\om_1,\om_6\}$;
\item{\rm{(8)}} $G=E_7$ and  $\om=\om_7$;
\item{\rm{(9)}}  $G=F_4$, $p=3$, and $\om=\om_4$;
\item{\rm {(10)}}  $G=G_2$ and  either $\om=\om_1$, or $p= 3$ and $\om=\om_2$.

 \end{enumerate}
\end{theo}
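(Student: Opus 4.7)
\noindent The plan is to argue by contraposition: assume $s \in T$ is semisimple with every eigenvalue of $\phi(s)$ of multiplicity $1$, but $s$ is not strongly regular, and show that $(G,\omega)$ must then appear in the list $(2)$--$(10)$. The earlier result cited in the abstract applies under precisely the same multiplicity hypothesis, so it supplies two immediate reductions: $s$ is regular in $G$, and every non-zero weight of $\phi$ occurs with multiplicity $1$. This already restricts $(G,\omega)$ to a short, explicit list. Since $s$ is not strongly regular, fix distinct $T$-roots $\alpha \neq \beta$ with $\alpha(s)=\beta(s)$; setting $\gamma := \alpha-\beta$, we have $\gamma \neq 0$, $\gamma(s)=1$, and $\gamma$ is not itself a root (the last point by regularity of $s$).

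\noindent The key translation is combinatorial. If $\mu \neq \nu$ are two non-zero weights of $\phi$ with $\mu-\nu = \gamma$, then $\mu(s)=\nu(s)$, and since each of $V_\mu, V_\nu$ is $1$-dimensional, $\phi(s)$ would have a repeated eigenvalue, contradicting our hypothesis. Similarly, if $\gamma$ itself is a weight of $\phi$ and the zero-weight space is non-zero, then the eigenvalue $1$ of $\phi(s)$ acquires multiplicity at least two. Writing $\Delta$ for the set of differences $\alpha-\beta$ of distinct roots that are themselves non-roots, we conclude: the weight system of $\phi$ contains no pair of distinct non-zero weights whose difference lies in $\Delta$, subject to the mild zero-weight caveat just noted.

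\noindent The remainder of the proof is a case-by-case check over the explicit list of admissible $(G,\omega)$ produced by the earlier result. For each pair not listed in $(2)$--$(10)$, I would exhibit a pair of non-zero weights of $V(\omega)$ whose difference lies in $\Delta$, contradicting the combinatorial condition above and thereby forcing $s$ to be strongly regular. For the exceptional cases $(2)$--$(10)$, the weight systems are sufficiently thin -- essentially Weyl-orbits of minuscule or quasi-minuscule weights, together with a handful of small-rank and small-characteristic exceptions -- so that every non-zero weight difference is already a root and the combinatorial obstruction vanishes; these therefore remain as genuine exceptions in the conclusion.

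\noindent The main obstacle I expect is the combinatorial bookkeeping in the exceptional-type groups, notably $E_6, E_7, F_4$, where the size of the root system and the underlying root arithmetic make the check of which weight differences $\mu-\nu$ land in $\Delta$ quite delicate. A secondary subtlety concerns the $p$-dependence visible in cases $(4)$, $(5)(c)$, and $(9)$: in these small-characteristic situations, either $\dim V(\omega)$ or the multiplicity of the zero weight can change, so the generic analysis must be adjusted, typically by appealing to tables of $p$-restricted weight multiplicities in the relevant characteristic.
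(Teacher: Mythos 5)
Your overall strategy is the one the paper uses: from the cyclicity of $\phi(s)$ deduce that $s$ is regular and that all weights of $V_\om$ have multiplicity $1$ (which pins $\om$ down to the short list of Table~\ref{tab:omega1}), and then, for each $(G,\om)$ on that list but not among the exceptions $(2)$--$(10)$, derive a contradiction by producing two distinct weights $\mu,\nu$ of $V_\om$ with $\mu(s)=\nu(s)$. This is exactly the mechanism of Propositions~\ref{ac9} and \ref{cnn3} and Lemma~\ref{c22} (the paper only needs these for types $A_n$ and $C_n$ with $p\neq 2$, since for all other types every Table~\ref{tab:omega1} entry already appears in the exception list, so nothing has to be proved).

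There is, however, a genuine quantifier error in your combinatorial translation, and as written your verification step would fail. From the hypotheses you only obtain \emph{one} pair of distinct roots $\al,\be$ with $\al(s)=\be(s)$, hence one element $\gamma=\al-\be$ of your set $\Delta$ with $\gamma(s)=1$; you cannot conclude that ``the weight system contains no pair of distinct non-zero weights whose difference lies in $\Delta$,'' because for the other elements $\delta\in\Delta$ you know nothing about $\delta(s)$. Consequently, to rule out a given $(G,\om)$ it does not suffice to ``exhibit a pair of non-zero weights whose difference lies in $\Delta$'': you must exhibit, for \emph{every} $W$-orbit of non-root differences $\gamma=\al-\be$ of distinct roots, a pair of weights of $V_\om$ differing by that particular $\gamma$. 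This is precisely why the paper fixes $\al=\al_1$ via the Weyl group, enumerates all admissible $\be$ (using regularity to discard those with $\al-\be$ a root), and treats each resulting $\gamma$ separately. A secondary inaccuracy: your claim that in the exceptional cases $(2)$--$(10)$ ``every non-zero weight difference is already a root'' is false (e.g.\ for $\om_i$ of $A_n$ or the spin modules of $B_n$, $D_n$); this does not damage the proof of Theorem~\ref{th1}, since nothing needs to be shown for those cases, but it is not how the genuineness of the exceptions is established --- that requires the explicit element constructions of Theorem~\ref{rr4}.
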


(We refer the reader to the end of this section for an explanation of the notation used in the statement.
For the reader's convenience the highest weights given in items (2)--(10) are collected in Table~\ref{tab:results}, at this end of this manuscript.)

The above result, as well as Theorems~\ref{re1}, \ref{rr4},  \ref{td4} below, can be viewed as an identification result,
in the following sense:
the
existence of an element whose spectrum satisfies a specified criteria on a given representation either determines the
representation or provides structural information about  the element. We expect that strongly regular
elements can play a certain role in the general theory of algebraic groups and finite groups of Lie type.
We mention a paper by  Seitz \cite[Lemma 2]{S93}, where he proves that $\al|_{T(q)}\neq \be |_{T(q)}$ for the maximal tori $T(q)$
 of finite quasisimple groups of Lie type $G(q)$, with $q>5$. This allows him to obtain a certain upper bound for
 the homogeneous components of $T(q)$ on \ir $G(q)$-modules. It was mentioned in \cite[p.179]{GS} that an analogous bound holds for the
 $s$-eigenspace dimensions, $s\in G$ semisimple,  provided $s$ is strongly regular.

 In fact,  we are able to obtain nearly the same conclusion as in Theorem \ref{th1} under a significantly weaker assumption, specifically,
 by allowing one of the \ei multiplicities of $\phi(s)$ to be arbitrary (excluding the trivial case of $s$ being central in $G$).
A step in this direction is made in our paper \cite[Theorem 2]{TZ21}, where we prove the following: 

\begin{theo}\label{c99} Let $G$ be a simply connected simple  algebraic group defined over $F$ and
let $s\in G$ be a non-regular non-central semisimple element.
Let $V$ be a non-trivial irreducible $FG$-module with associated representation $\phi$.
Suppose that at most one eigenvalue of $\phi(s)$ is of  multiplicity greater than $1$. Then
one of the \f holds:
\begin{enumerate}[]
\item{\rm{(1)}} $G$ is of Lie type $A_n$, $B_n$ ($p\ne 2$), $C_n$ or $D_n$ and $\dim V=n+1,2n+1,2n,2n$, respectively;
  \item{\rm{(2)}} $G$ is of Lie type $B_n$, $n\geq 3$, $p=2$ and $\dim V = 2n$;

\item{\rm{(3)}} $G$ is of type $A_3$ and $\dim V=6;$

\item{\rm{(4)}} $G$ is of type $C_2$, $p\neq 2$, $\dim V=5$.
 \end{enumerate}\end{theo}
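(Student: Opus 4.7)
The plan is to exploit the structure of the connected centralizer $C := C_G(s)^\circ$, which, for a non-regular semisimple element $s$, is a proper reductive subgroup of $G$ of maximal rank containing a maximal torus $T$ with $s\in T$. Its root subsystem $\Psi := \{\alpha\in \Phi : \alpha(s)=1\}$ is non-empty and the derived subgroup $L := [C,C]$ is a non-trivial semisimple subgroup of $G$. Before the main argument I would reduce to the case that the highest weight of $V$ is $p$-restricted: by Steinberg's tensor product theorem a non-$p$-restricted $V$ is an external tensor product $V_1\otimes V_2^{(p)}\otimes\cdots$ of $p$-restricted Frobenius twists, and if more than one tensor factor is non-trivial, the eigenvalues of $\phi(s)$ multiply in a way that forces many multiplicities to exceed $1$, contradicting the hypothesis except in degenerate situations which can be ruled out by hand.

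The first key observation is that every eigenspace $V_\lambda$ of $\phi(s)$ is $C$-invariant, since $C$ commutes with $s$. By hypothesis each $V_\lambda$ with $\lambda\neq \lambda_0$ (the distinguished fat eigenvalue) is one-dimensional and hence trivial as an $L$-module, because $L$ is perfect. Consequently
\[
V|_L \;\cong\; (\dim V-\dim V_{\lambda_0})\cdot \mathbf{1}_L\;\oplus\; V_{\lambda_0}|_L,
\]
and in particular every $L$-composition factor of $V|_L$ of dimension at least $2$ lies inside the single eigenspace $V_{\lambda_0}$.

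The second step is to specialize, for each $\alpha\in\Psi$, to the $A_1$-subgroup $H_\alpha=\langle U_\alpha, U_{-\alpha}\rangle\subseteq L$. The $H_\alpha$-composition factors of $V$ are controlled by the $\alpha$-strings through the $T$-weights of $V$, and by Schur's lemma each $H_\alpha$-irreducible summand is contained in a single $s$-eigenspace. Hence for every $\alpha\in\Psi$, all non-trivial $H_\alpha$-composition factors of $V$ sit inside $V_{\lambda_0}$; equivalently, the set of $T$-weights $\mu$ of $V$ that support a non-trivial $\alpha$-string is confined to the single fiber $\mu(s)=\lambda_0$. Imposing this constraint simultaneously for every root $\alpha\in\Psi$, and varying $s$ over the non-regular non-central semisimple locus, produces strong restrictions on the weight diagram of $V$ and, through Weyl's dimension formula together with its modular refinements, on $\dim V$.

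The final step, and the main obstacle, is the classification: for each Lie type of $G$ one must isolate the $p$-restricted dominant weights $\omega$ for which $L(\omega)$ satisfies the constraints above for some non-regular non-central $s$. For classical types one eliminates all weights save that of the natural module, together with the two small exceptional modules $L(\omega_2)$ for $A_3$ (dimension $6$) and $L(\omega_2)$ for $C_2$ in odd characteristic (dimension $5$); for each exceptional Lie type one must rule out every candidate altogether. The tools are explicit weight-diagram computations, Premet's theorem controlling weight multiplicities in good characteristic, and known tables of small-dimensional irreducible modules, supplemented by \emph{ad hoc} arguments for small-rank cases and for the prime $p=2$ in type $B_n$, where the irreducible module of highest weight $\omega_1$ has dimension $2n$ rather than $2n+1$.
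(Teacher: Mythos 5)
A preliminary remark: the paper you were given does not actually prove this statement. Theorem~\ref{c99} is imported verbatim from the authors' earlier work (cited as \cite[Theorem 2]{TZ21}), so there is no internal proof to compare against; I can only judge your argument on its own terms and against the auxiliary facts the present paper does record from that source (Lemmas~\ref{ss2} and \ref{td2}, Theorem~\ref{ag8}).

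Your structural skeleton is correct and is essentially the standard (and, as far as one can tell, the intended) line of attack. The reduction to $p$-restricted highest weights is cleaner than you make it sound: by \cite[Lemma 1]{TZ21} (Lemma~\ref{ss2}(2) here), an element that is almost cyclic on a non-trivial tensor product $V_1\otimes V_2$ is automatically regular, so for your non-regular $s$ a genuine Steinberg factorization is excluded outright --- there are no ``degenerate situations'' left to rule out by hand. Your key observations --- that the $s$-eigenspaces are $C_G(s)^\circ$-stable, that one-dimensionality forces triviality under the perfect group $L=[C,C]$, and hence that for every $\alpha$ with $\alpha(s)=1$ any pair of weights $\mu,\mu+\alpha\in\Omega(V)$ must map into the single fat eigenvalue fiber --- are all sound; indeed the last point follows directly from $\mu(s)=(\mu+\alpha)(s)$ without any appeal to $H_\alpha$-composition factors, which in characteristic $p$ is the safer formulation. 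The genuine gap is that the final step, where you must show these constraints eliminate every $p$-restricted $\omega$ except the natural modules, $\omega_2$ for $A_3$, and $\omega_2$ for $C_2$ ($p\ne 2$), and eliminate all exceptional types entirely, is asserted rather than executed. That step is the entire substance of the theorem: one needs, at minimum, a quantitative argument (e.g., that the differences of weights lying in the fat fiber generate a sublattice of the root lattice on which $s$ is trivial, forcing either $s$ central or $\dim V$ minimal), or else the route via Theorem~\ref{ag8} and Tables~\ref{tab:omega1}--\ref{tab:omega2} to cut the candidate list down to something finite before the case analysis. As written, your text is a correct and well-aimed strategy, not a proof.
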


In view of Theorem \ref{c99}, in our consideration of semisimple elements having at most one eigenvalue mutiplicity greater than $1$ on some
 irreducible representation,  we may now restrict our attention to regular elements.
We have the following result.

\begin{theo}\label{re1}
Let $G$ be a  simply connected simple algebraic group defined over $F$, $s\in G$ a regular semisimple element,
and let $\phi $ be a non-trivial \irr with $p$-restricted highest weight $\om$. Suppose that at most one \ei \mult of $\phi(s)$ is greater
than $1$. Then  one of the following holds:\begin{enumerate}[]
  \item{\rm{(i)}} $s$ is strongly regular;

\item{\rm {(ii)}} $G$ and $\om$ are as in one of the cases $(2)$ - $(10)$ of the conclusion of Theorem {\rm \ref{th1}};

\item{\rm{(iii)}} $G=A_n$ and $\om\in\{2\om_1,2\om_2\}$, or $(n,p) = (2,3)$ and $\om=\om_1+\om_2$;
 
\item{\rm{(iv)}} $G=B_n$, $n\geq 3$,  $p=2$ and $\om = \om_2$;
 \item{\rm{(v)}} $G=C_n$, $n\geq 3$, $(n,p)\neq (3,3)$ and $\om=\om_2$, or $n=4$, $p\neq 2,3$ and $\om=\om_4;$

\item {\rm{(vi)}} $G=F_4$, $p\neq 3$ and $\om=\om_4$, or $p=2$ and $\om=\om_1$.

\end{enumerate}
 \end{theo}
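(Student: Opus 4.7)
The plan is to combine Theorems~\ref{c99} and~\ref{th1} with a case analysis of the weight structure of $\phi$. First, Theorem~\ref{c99} already handles the case that $s$ is non-regular: each of its four conclusions gives $(G,V)$ pairs whose \hw $\om$ appears in the exceptional lists (2)--(10) of Theorem~\ref{th1}, and thus falls into conclusion (ii) of the present statement. Hence we may assume $s$ is regular. If furthermore every \ei of $\phi(s)$ has \mult $1$, then Theorem~\ref{th1} directly yields conclusion (i) or (ii). So the remaining case to handle is: $s$ is a regular semisimple element with exactly one \ei of $\phi(s)$ of \mult strictly greater than $1$, and we must show that when $s$ is not strongly regular, $(G,\om)$ appears in items (ii)--(vi).

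Since $s$ is regular but not strongly regular, there exist distinct $T$-roots $\al\ne\be$ with $\al(s)=\be(s)$; equivalently $(\al-\be)(s)=1$ while $\al-\be$ is not itself a root. The \eis of $\phi(s)$ are the values $\mu(s)$ for $\mu$ a weight of $V$, counted with weight-\mult, and two weights $\mu,\mu'$ contribute to a common \ei \ii $(\mu-\mu')(s)=1$. Under the hypothesis, all such coincidences among weights of $V$ must concentrate at a single value of $s$. The strategy is, for each $p$-restricted \hw $\om$ not already listed in Theorem~\ref{th1}, to exploit the identity $(\al-\be)(s)=1$ to produce further weight coincidences and to conclude that either $(G,\om)$ is one of the exceptions (iii)--(vi), or a second \ei of \mult $\geq 2$ appears, contradicting the hypothesis.

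Concretely, for each $G$ and each relevant $\om$ we would use the explicit description of the weights of the Weyl module $V(\om)$ (in the standard basis of $X(T)$) together with the $W$-orbit structure to translate $(\al-\be)(s)=1$ into a multiplicative relation among the torus coordinates of $s$. We then ask whether this relation propagates: given one pair of weights of $V$ differing by $\al-\be$, which other pairs of weights of $V$ must then also take equal values at $s$? For \reps with a sufficiently rich weight structure, such as symmetric and alternating squares of natural modules, spin modules and exceptional adjoint-type \reps, this should force a second repeated \ei except for a very small number of \hw s, which we expect to be precisely the ones appearing in items (iii)--(vi). The converse direction, realising each exceptional pair $(G,\om)$, would be carried out by the explicit construction of a suitable regular $s\in T$ and verification that only one \ei of $\phi(s)$ is repeated; for the $A_n$-type cases $2\om_1$, $2\om_2$, $\om_1+\om_2$ this amounts to analysing sums and products of pairs of the torus entries of $s$.

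The main obstacle will be the case analysis for classical groups of small rank and for fundamental \reps on the borderline between ``rich enough to force a second repeated eigenvalue'' and ``sparse enough to admit exactly one''. We expect the delicate cases to be the $\om_2$ \reps of $B_n$ in \chara $2$ and of $C_n$ in general (whose Weyl module structure depends on $p$), the half-spin type \reps, the $\om_4$ \rep of $C_4$, and the $\om_1$ and $\om_4$ \reps of $F_4$ in small \chara, where tailor-made combinatorial arguments on the weight diagram and the associated root-pair coincidences will be required. By contrast, the large exceptional groups and most high-\hw modules should be handled more uniformly: once enough non-extremal weights are present in $V$, any non-strongly-regular regular $s$ automatically produces several simultaneous coincidences, and the exception list cannot be enlarged.
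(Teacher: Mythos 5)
Your overall skeleton --- reduce to a regular, not strongly regular $s$, pick distinct roots $\al\ne\be$ with $\al(s)=\be(s)$, and hunt for a second pair of weights of $V_\om$ forced to take equal values at $s$ --- is exactly the engine the paper runs in Proposition~\ref{ac9}, Lemma~\ref{dn33}, Proposition~\ref{cnn3} and Lemma~\ref{c22}. (Your appeal to Theorem~\ref{c99} is superfluous here, since Theorem~\ref{re1} already assumes $s$ regular, but that is harmless.) The genuine gap is at the start: you propose to run this analysis ``for each $p$-restricted highest weight $\om$ not already listed in Theorem~\ref{th1}'', which is an infinite family, and your only control over it is the unproved expectation that a ``sufficiently rich weight structure'' forces a second repeated eigenvalue. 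The paper closes precisely this gap by first invoking Theorem~\ref{ag8}: the mere existence of a non-central semisimple element that is almost cyclic on $V_\om$ forces \emph{all non-zero weights of $V_\om$ to have multiplicity $1$}, and the $p$-restricted modules with that property are completely classified (Tables~\ref{tab:omega1} and \ref{tab:omega2}, taken from \cite{TZ2}). This reduces the problem to a short explicit list of highest weights for each type, after which the root-pair coincidence argument you describe is carried out case by case, normalising $\al,\be$ by the Weyl group and using Premet's theorem (Theorem~\ref{premet}) to guarantee that the auxiliary weights needed actually occur in $\Om(V_\om)$. Without this reduction, or some substitute for it, your plan cannot terminate, and the assertion that only the weights in (iii)--(vi) survive is exactly what remains to be proved rather than something that follows from the sketch.

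A secondary point: outside type $A_n$ the paper does not re-derive the ``richness forces extra coincidences'' principle for general $\om$; only for $A_n$ (Proposition~\ref{ac9}, via Lemma~\ref{ww2} and subdominant-weight arguments) is a direct argument for arbitrary non-exceptional $\om$ given. So even granting your intuition, the workload you are signing up for amounts to reproving \cite[Theorem 2]{TZ21} and parts of \cite{TZ2}. Note also that the ``converse'' constructions you mention (realising each exceptional pair by an explicit regular, not strongly regular element) belong to Theorem~\ref{rr4}, not to Theorem~\ref{re1}, whose statement asserts only the forward implication.
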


The following result shows that the exceptions in Theorems \ref{th1} and \ref{re1} are genuine.

\begin{theo}\label{rr4} Let $G$ be as  above and let $\phi$ be a non-trivial \irr of $G$.

  $(1)$ Suppose that  $\phi$ is as in items $(2)$ - $(10)$ of Theorem {\rm \ref{th1}}. Then there exists a regular, not strongly regular,
  semisimple element  $s\in G$ such that all \ei multiplicities of $\phi(s)$ are equal to $1$.

  $(2)$ Suppose that  $\phi$ is as in items ${\rm{(ii)}}$ -  ${\rm{(vi)}}$ of
  Theorem {\rm \ref{re1}}. Then there exists a regular, not strongly regular,  semisimple element  $s\in G$ such that at most one
  \ei multiplicity of $\phi(s)$ is greater than $1$. 
\end{theo}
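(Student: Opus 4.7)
The proof is essentially a case-by-case construction. For each exceptional pair $(G,\omega)$ listed in Theorems~\ref{th1} and~\ref{re1} we exhibit an explicit regular semisimple $s\in T$ which is not strongly regular and satisfies the stated spectral condition on $\phi(s)$. Our general strategy is to fix a pair of distinct $T$-roots $\alpha\ne\beta$ and restrict attention to the codimension-one subvariety $T_{\alpha=\beta}=\{t\in T:\alpha(t)=\beta(t)\}$; any element of $T_{\alpha=\beta}$ is automatically not strongly regular. Regularity holds for generic $s$ on $T_{\alpha=\beta}$, since the vanishing of any fixed root is a proper closed condition on this subvariety. Moreover, for distinct weights $\mu,\nu$ of $\phi$ one has $\mu(s)=\nu(s)$ identically on $T_{\alpha=\beta}$ if and only if $\mu-\nu=\pm(\alpha-\beta)$ in the character lattice; for all other pairs, coincidence is again a proper closed condition. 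So what needs to be verified in each case is that $\alpha,\beta$ can be chosen so that the set of \emph{forced} weight coincidences is empty (for part~(1)) or yields exactly one repeated eigenvalue of $\phi(s)$ (for part~(2)).

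For the classical groups, this is a direct computation in diagonal coordinates. For case~(2) of Theorem~\ref{th1} with $G=A_n$ and $\omega=\omega_k$, take $s=\diag(1,q,q^2,\ldots,q^n)$ (suitably normalised to have determinant one) with $q$ generic: then $(\varepsilon_1-\varepsilon_2)(s)=(\varepsilon_2-\varepsilon_3)(s)$, while the weights of $\phi$ correspond to $k$-element subsets $J\subseteq\{0,1,\ldots,n\}$ with evaluations $q^{\sum J}$, which are pairwise distinct for generic $q$. The same pattern, using $\diag(q^{a_1},\ldots,q^{a_n},q^{-a_n},\ldots,q^{-a_1})$ (with a central $1$ inserted for $B_n$), handles the natural, spin and $V(\omega_n)$ representations of $B_n,C_n,D_n$: the weights are $\pm\varepsilon_i$ or half-sums $\tfrac12\sum\pm\varepsilon_i$, and the choice of integer exponents $(a_i)$ reduces to an elementary combinatorial verification. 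For the exceptional groups in cases~(7)--(10) of Theorem~\ref{th1}, the representations are minuscule or quasi-minuscule and their weight systems are explicitly tabulated; the verification then consists of a finite enumeration: fix $(\alpha,\beta)$, list the weight pairs $(\mu,\nu)$ with $\mu-\nu\in\{\pm(\alpha-\beta)\}$, and choose numerical values so that no other coincidence is present and no root evaluates to $1$.

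The principal obstacle lies in part~(2), where $\phi$ may itself carry non-trivial weight multiplicities and where one repeated eigenvalue of $\phi(s)$ is permitted. In cases~(iii)--(vi) of Theorem~\ref{re1}, for instance $G=A_n$ with $\omega=2\omega_1$ (symmetric square of the natural representation) or $G=B_n,C_n$ with $\omega=\omega_2$ (adjoint-type representations), we must arrange that the forced coincidences $\mu(s)=\nu(s)$ coming from the root relation $\alpha-\beta$, together with the pre-existing weight multiplicities of $\phi$, collapse to a \emph{single} eigenvalue of $\phi(s)$ rather than producing several repeated eigenvalues. This is engineered by selecting $\alpha,\beta$ so that either all forced-coincident pairs $(\mu,\nu)$ share a common sum $\mu+\nu$ (so that they partition into orbits landing in one eigenvalue), or so that the multiply-occurring weights of $\phi$ (typically the zero weight) already evaluate to a common point at the chosen $s$. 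Case~(vi) with $G=F_4$, $\omega=\omega_4$ (the adjoint representation) is the most delicate, since the zero-weight space has multiplicity four; here the regular non-strongly-regular $s$ must be chosen so that the unique eigenvalue of $\phi(s)$ of multiplicity greater than one is precisely the eigenvalue $1$, requiring a carefully constrained placement of $s$ inside $T_{\alpha=\beta}$, guided by the explicit root and weight data of $F_4$ in characteristic different from $3$.
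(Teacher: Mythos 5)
Your overall strategy -- work on the subvariety $T_{\alpha=\beta}=\ker(\alpha-\beta)$ and argue that a generic point there is regular, not strongly regular, and has the required spectrum -- is viable, and in fact the paper uses exactly this idea once (for the element of $A_7\subseteq E_7$ acting on $V_{\omega_2}\oplus V_{\omega_6}$, where an irreducibility-and-dimension count shows the subvariety $\{d_1^2=d_2d_3\}$ is not contained in the union of the kernels $\ker(\lambda-\mu)$). Everywhere else, however, the paper proceeds by writing down completely explicit elements (diagonal entries $b_0,b_0^3,b_0^5,b_1,\ldots$ with the $b_i$ primitive $p_i$-th roots of unity for distinct large primes) and, for the exceptional groups, by restricting to subsystem subgroups ($B_4<F_4$, $D_5<E_6$, $F_4<E_6$, $A_7<E_7$) so that the classical constructions can be reused. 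That device is what makes the exceptional cases tractable; your proposed ``finite enumeration'' over the full weight systems of $E_6$ and $E_7$ would be far heavier.

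There are two genuine gaps. First, your key general claim is wrong as stated: $\mu(s)=\nu(s)$ holds identically on $\ker(\alpha-\beta)$ if and only if $\mu-\nu\in\mathbb{Z}(\alpha-\beta)$, not only for $\mu-\nu=\pm(\alpha-\beta)$; and on the \emph{identity component} of the kernel the condition weakens further to $\mu-\nu\in\mathbb{Q}(\alpha-\beta)\cap X(T)$. This is not a pedantic point: the case $\beta=-\alpha$ must be treated (the paper does so repeatedly, e.g.\ case (v) in its $C_n$ analysis and the $A_1$, $A_2$ discussions), and there $\ker(\alpha-\beta)^\circ=\ker(\alpha)^\circ$ consists entirely of non-regular elements, so ``generic on $T_{\alpha=\beta}$'' must be interpreted on a non-identity component and your genericity assertions for regularity and for weight separation both need re-examination. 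Second, the substance of the theorem is the case-by-case verification, and for the hardest cases -- all of part (2), i.e.\ $2\omega_1$ and $2\omega_2$ for $A_n$, $\omega_2$ for $B_n$ ($p=2$) and $C_n$, $\omega_4$ for $C_4$, and the two $F_4$ modules -- you describe what would have to be engineered but do not exhibit any element or carry out any check; these are precisely the cases the paper settles with concrete constructions (its Lemmas on $\omega_2$ and $\omega_k$ for $C_n$, the $B_4$-restriction for $F_4$, the $F_4$-restriction for $E_6$). Note also that you misidentify $V_{\omega_4}$ for $F_4$ as the adjoint module with four-dimensional zero weight space: the adjoint module is $V_{\omega_1}$; $V_{\omega_4}$ is the $26$-dimensional module whose zero weight space has dimension $2$ when $p\ne 3$, and the intended repeated eigenvalue there arises from the trivial composition factors of its restriction to $B_4$. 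As it stands the proposal is a programme, not a proof.
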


There are many examples of \ir \reps  $\phi$ and regular elements $s\in T$ such that  $\phi(s)$ has exactly one \ei of \mult greater than 1.
 However, these examples are within a rather narrow class of representations, as the following result shows.

 \begin{theo}\label{ag8} {\rm \cite[Theorem 2]{TZ21}} Let $G$ be a simply connected simple  algebraic group defined over $F$ and $\phi$ a
   non-trivial \irr of $G$. Then
  the following statements are equivalent:\begin{enumerate}[]
   
\item{\rm{(i)}} There exists a non-central semisimple element $s\in G$ such that at most one \ei \mult of $\phi(s)$ is equal to $1$.
\item{\rm {(2)}} All non-zero weights of $\phi$ are of \mult $1$.
  \end{enumerate}
\end{theo}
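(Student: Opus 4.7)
The asserted equivalence has two directions. For \textrm{(ii)}$\Rightarrow$\textrm{(i)}, I would produce the required element by a genericity argument: since there are finitely many distinct weights of $\phi$, each equation $\lambda(s)=\mu(s)$ for $\lambda\neq\mu$ distinct weights cuts out a proper closed subvariety of $T$. Choosing $s\in T$ outside the union of these finitely many subvarieties and outside the finite central subgroup $Z(G)$ delivers a non-central element whose eigenvalues are in bijection with the weights of $\phi$ together with their multiplicities. Hence if every non-zero weight of $\phi$ has multiplicity $1$, only the (possibly absent) zero-weight eigenvalue can have multiplicity greater than $1$.

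The converse \textrm{(i)}$\Rightarrow$\textrm{(ii)} is the substantive direction, which I would argue by contrapositive. Assume that some non-zero weight $\lambda$ of $\phi$ has weight multiplicity $m(\lambda)\geq 2$; the goal is to show that for every non-central semisimple $s\in G$, at least two eigenvalues of $\phi(s)$ have multiplicity exceeding $1$. The key ingredient is Weyl-group invariance of weight multiplicities, which yields $m(w\lambda)=m(\lambda)\geq 2$ for every $w$ in the Weyl group $W$. Every value $\mu(s)$ with $\mu\in W\cdot\lambda$ is therefore an eigenvalue of $\phi(s)$ of multiplicity at least $2$, so it suffices to exhibit two distinct values within the set $\{\mu(s):\mu\in W\cdot\lambda\}$.

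Non-centrality of $s$ supplies a root $\alpha$ with $\alpha(s)\neq 1$. If $\langle \lambda,\alpha^{\vee}\rangle\neq 0$, then $(s_\alpha\lambda)(s)=\lambda(s)\cdot\alpha(s)^{-\langle\lambda,\alpha^\vee\rangle}$, which differs from $\lambda(s)$ as soon as $\alpha(s)^{\langle \lambda,\alpha^\vee\rangle}\neq 1$. The main obstacle is the residual scenario in which, for every root $\alpha$ with $\alpha(s)\neq 1$, either $\langle \lambda,\alpha^\vee\rangle=0$ or $\alpha(s)$ is a root of unity whose order divides $\langle \lambda,\alpha^\vee\rangle$. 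I would address this by analysing the subroot system $\Phi_1=\{\alpha\in\Phi:\alpha(s)=1\}$ and the proper Levi subgroup it generates together with $T$, exploiting the fact that $\Phi\setminus\Phi_1$ is non-empty and must interact non-trivially with the support of $\lambda$; as a final resort one can reduce to the explicit classification of irreducible representations having all non-zero weight multiplicities equal to $1$ and dispose of the remaining exceptional configurations by direct inspection.
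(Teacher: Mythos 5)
First, a point of comparison: the paper does not prove this statement at all — it is quoted from \cite[Theorem 2]{TZ21} — so there is no internal proof to measure your attempt against; what you are proposing is a proof of that external result. (Note also that item (i) contains a typo: it should read ``at most one eigenvalue multiplicity of $\phi(s)$ is \emph{greater than} $1$'', i.e.\ $\phi(s)$ is almost cyclic. You have read it with the intended meaning, which is the only reading consistent with how the theorem is used throughout the paper.)

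Your direction (ii)$\Rightarrow$(i) is correct and complete: for distinct weights $\lambda,\mu$ the set $\{t\in T : (\lambda-\mu)(t)=1\}$ is a proper closed subgroup of the irreducible variety $T$, so a generic $s\in T$ is non-central and separates the weights, and then the eigenvalue multiplicities of $\phi(s)$ are exactly the weight multiplicities. The genuine gap is in (i)$\Rightarrow$(ii), and it sits precisely where you flag it. Granting $m(w\lambda)=m(\lambda)\geq 2$ for all $w\in W$, you must exhibit \emph{two distinct} eigenvalues of multiplicity at least $2$ for \emph{every} non-central semisimple $s$. Your reflection computation does this only when $\alpha(s)^{\langle\lambda,\alpha^\vee\rangle}\neq 1$ for some root $\alpha$; the residual case — $s$ of finite order with $\alpha(s)$ a root of unity whose order divides $\langle\lambda,\alpha^\vee\rangle$ whenever the latter is non-zero — is not a degenerate afterthought but the actual substance of the theorem. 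In that case the whole orbit $W\lambda$ may take a single value on $s$, which produces only \emph{one} eigenvalue of large multiplicity, and a second eigenvalue of multiplicity at least $2$ must then be extracted from coincidences $\mu(s)=\nu(s)$ among other weights; nothing in your sketch does this. Moreover, your proposed fallback — reducing to the classification of irreducible modules all of whose non-zero weights have multiplicity $1$ — is unavailable: in the contrapositive you have assumed precisely that $\phi$ is \emph{not} in that class, so that classification says nothing about the configurations you still need to rule out. As it stands, the hard direction is a plan rather than a proof, and the missing case analysis is exactly what \cite{TZ21} supplies.
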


\noindent The \ir \reps $\phi$ whose non-zero weights  are of \mult $1$ have been determined in \cite{TZ2}
and those with $p$-restricted highest weight are reproduced in Tables~\ref{tab:omega1} and \ref{tab:omega2}; see the end of the manuscript.

One can ask whether the assumption that $\om$  be $p$-restricted can be dropped in Theorems \ref{th1} and \ref{re1}.
 Here, we have the following result.

 \begin{theo}\label{td4}
Let $G$, $s$ be as in Theorem {\rm \ref{th1}}, where ${\rm char}(F)=p> 0$.
Let $\phi$ be an irreducible representation of $G$ with \hw  $\om=\sum p^i\lam_i$, where all $\lam_i$ are $p$-restricted weights and at least two of them are non-zero. Suppose that 
at most one of the \ei multiplicities  of $\phi(s)$ is greater than $1$.
Then either $s$ is strongly regular or for each $i$ with $\lambda_i\ne 0$, we have $(G,\lambda_i) = (G,\omega)$ for $(G,\omega)$ as in items $(2)-(10)$ of
Theorem {\rm \ref{th1}}. Moreover, \begin{enumerate}[]
\item{\rm{(i)}} if  $(G,p) = (C_n,2)$, then for all $i$, $(\lam_i,\lam_{i+1})\neq (\om_n,\om_1)$;
\item{\rm{(ii)}} if $(G,p) = (G_2,2)$, then for all $i$, $(\lam_i,\lam_{i+1})\neq (\om_1,\om_1)$;
  \item{\rm{(iii)}} if $(G,p) = (G_2,3)$, then  for all $i$, $(\lam_i,\lam_{i+1})\neq (\om_2,\om_1) $.\end{enumerate}\end{theo}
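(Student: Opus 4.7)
The strategy is to use Steinberg's tensor product theorem to reduce to an application of Theorem \ref{th1} on each Frobenius factor, and then to rule out the forbidden pairs (i)--(iii) by a direct weight-multiset calculation; this last step is the main obstacle.

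\textbf{Steinberg reduction.} By Steinberg's tensor product theorem, $\phi\cong\bigotimes_i\phi_i^{(p^i)}$, where $\phi_i$ is the irreducible $FG$-module of $p$-restricted highest weight $\lambda_i$; hence $\phi(s)=\bigotimes_i\phi_i(s^{p^i})$, and the eigenvalues of $\phi(s)$ are the products $\prod_i\mu_i(s^{p^i})$, with multiplicity, over choices of weights $\mu_i$ of $\phi_i$. Since semisimple elements in characteristic $p$ have order coprime to $p$, the map $t\mapsto t^{p^i}$ is a bijection on the maximal torus, so $s^{p^i}$ is central, regular, or strongly regular iff $s$ is. I may assume $s$ is non-central, since otherwise $\phi(s)$ is scalar and the conclusion is either vacuous or handled trivially.

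\textbf{Each Frobenius factor is regular.} I next show that for every $i$ with $\lambda_i\ne 0$, all eigenvalues of $\phi_i(s^{p^i})$ have multiplicity $1$. Suppose for contradiction that $\phi_i(s^{p^i})$ has an eigenvalue $a$ of multiplicity $\ge 2$. Since $s$ is non-central and $\phi_i$ non-trivial, $\phi_i(s^{p^i})$ is non-scalar, so admits another eigenvalue $a'\ne a$. By hypothesis there is some $j\ne i$ with $\lambda_j\ne 0$; correspondingly $\phi_j(s^{p^j})$ is also non-scalar, with distinct eigenvalues $c_1\ne c_2$. Fixing eigenvalues $b_k$ of the remaining factors, the products $a\,c_1\prod_k b_k$ and $a\,c_2\prod_k b_k$ are distinct eigenvalues of $\phi(s)$, each of multiplicity $\ge 2$, contradicting the hypothesis. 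Therefore $\phi_i(s^{p^i})$ is regular in $\GL(V_{\lambda_i})$; by $(*)$, $s^{p^i}$ and hence $s$ is regular in $G$. As $s$ is not strongly regular, Theorem \ref{th1} applied to the pair $(\phi_i,s^{p^i})$ forces $(G,\lambda_i)$ to lie in one of (2)--(10).

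\textbf{Excluding the forbidden pairs.} It remains to rule out the listed consecutive combinations $(\lambda_i,\lambda_{i+1})$ in (i)--(iii), which is the main obstacle. The key observation is that two weights $\mu+p\nu$ and $\mu'+p\nu'$ of $\phi_i^{(p^i)}\otimes\phi_{i+1}^{(p^{i+1})}$ (scaled by $p^i$) coincide iff $\mu-\mu'=p(\nu'-\nu)$, and to conclude one needs at least two such \emph{distinct} collisions so as to produce two different eigenvalues of $\phi(s)$ of multiplicity $\ge 2$. For (i), $G=C_n$, $p=2$, $(\lambda_i,\lambda_{i+1})=(\omega_n,\omega_1)$: the weights of $L(\omega_1)$ are $\pm\varepsilon_k$, and using the explicit weight structure of $L(\omega_n)$ in characteristic $2$ (together with $\omega_n=\sum_k\varepsilon_k$), one exhibits many distinct pairs $(\mu_1,\nu_1)\ne(\mu_2,\nu_2)$ with $\mu_1+2\nu_1=\mu_2+2\nu_2$, producing several collisions. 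The $G_2$ cases (ii) and (iii) are handled by analogous direct enumerations in the small modules of dimensions $6$ or $7$ ($\omega_1$) and $7$ or $14$ ($\omega_2$), where the matching between the short/long root scaling and the Frobenius factor $p\in\{2,3\}$ drives the coincidences. The delicate point is to show that these collisions are unavoidable for every semisimple $s$ compatible with the earlier reductions, and that they survive tensoring with any remaining Frobenius factors.
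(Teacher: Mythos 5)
Your Steinberg reduction and the argument that each Frobenius factor $\phi_i(s^{p^i})$ must be cyclic are correct and match the paper's route: the paper packages exactly this tensor-factor argument as Lemma~\ref{td2} (via \cite[Lemmas 1 and 3]{TZ21}) and then, as you do, applies Theorem~\ref{th1} to each factor to place every $(G,\lambda_i)$ in cases (2)--(10).

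The gap is in your third step, and you flag it yourself: you never actually rule out the pairs in (i)--(iii). Exhibiting weight coincidences $\mu_1+p\nu_1=\mu_2+p\nu_2$ in $V_{\lambda_i}\otimes V_{\lambda_{i+1}}^{Fr}$ only shows that certain weights of $V_\om$ have multiplicity at least $2$; to contradict almost cyclicity you still need two \emph{distinct eigenvalues} of $\phi(s)$ of multiplicity at least $2$, and whether two such repeated weights evaluate differently at $s$ depends on $s$ --- this is precisely the ``delicate point'' you leave open, together with the survival of the collisions after tensoring with the remaining Frobenius factors. The paper closes this gap not by enumeration but by invoking Theorem~\ref{ag8}: if a non-central $s$ is almost cyclic on $V_\om$, then \emph{all} non-zero weights of $V_\om$ have multiplicity $1$, a condition on the module alone, independent of $s$. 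The conditions (i)--(iii) then follow from the classification of irreducible modules with (non-restricted) highest weight all of whose non-zero weights have multiplicity one, quoted from \cite[Theorem 2(2)]{TZ2} and \cite[Proposition 2]{SZ1}. Without Theorem~\ref{ag8} (or an equivalent argument showing that a single non-zero weight of multiplicity at least $2$ already excludes almost cyclicity for every non-central $s$), your sketch does not yield (i)--(iii).
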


\vskip1cm

{\bf Notation} We fix throughout an algebraically closed field $F$ of characteristic $p\geq 0$. By abuse of notation, when we write $p>p_0$ for some prime $p_0$, we allow
the case ${\rm char}(F) = 0$.

 Throughout the paper  $G$ is a  simple simply connected linear algebraic group defined over $F$. (As mentioned before, we will suppress
 the adjective ``linear''.)
 All $G$-modules considered are rational finite-dimensional $FG$-modules.

We fix a maximal torus $T$ in $G$, which in turn defines the roots of $G$ as well as the weights of
$G$-modules and representations. Recall that the $T$-weights of a $G$-module $V$ are the \ir constituents of the restriction of $V$ to $T$.
When $T$ is fixed (which we assume throughout the paper), we omit the reference to $T$ and write ``weights'' in place of ``$T$-weights''. The set of
weights of a $G$-module $V$ is denoted by $\Omega(V)$. Recall that the roots of $G$ are the non-zero weights of the adjoint representation,
equivalently of the $G$-module ${\rm Lie}(G)$.  We denote the set of roots by $\Phi$ or $\Phi(G)$. The $\mathbb Z$-span of $\Phi$ is
denoted by $R$ or $R(G)$. The weights in $R$ are called {\it radical}. We fix a base of the root system and the 
simple roots of $\Phi(G)$ are denoted by $\al_1\ld \al_n $ and ordered as in \cite{Bo}; we denote the associated set of positive roots by
$\Phi^+$ and the $\mathbb Z_{\geq 0}$-span of the simple roots by $R^+$.

The Weyl group of $G$ is denoted by $W$ (or $W(G)$); in fact $W(G)=N_G(T)/T$ so the conjugation
action of $N_G(T)$ on $T$ yields an action of $W(G)$  on $T$ and on the $T$-weights. The $W$-orbit of $\mu\in\Om$ is denoted by $W\mu$.
Minuscule weights are defined and tabulated in \cite[Ch. VIII, \S 7.3]{Bo8}.

The set $\Omega={\rm Hom} (T,F^\times)$ (the rational homomorphisms of $T$ to  $F^\times$)
is called the {\it weight lattice}, which is a free $\ZZ$-module of finite rank
called {\it the rank of} $G$.  For a $G$-module $V$, $t\in T$ and $\mu\in \Om(V)$, $\mu(t)$ is an \ei of $t$ on $V$; then $(-\mu)(t)=t\up$
and $(\mu+\nu)(t)=\mu(t)\nu(t)$ for $\nu\in\Om(V)$.
As every semisimple element of $G$ is conjugate to an element of $T$, in most  results which follow, the assumption that $s$
is semisimple   is  replaced by the assumption that $s\in T$.

We let $\{\om_1,\dots,\om_n\}$ be the set of fundamental dominant weights with respect to the choice of simple roots. 
These form a $\ZZ$-basis of $\Omega$, so every $\nu\in \Omega$ can be expressed in the form $\nu = \sum a_i\om_i$ ($a_i\in \ZZ$); the set of  $\nu$ with $a_i\geq 0$ is denoted by
$\Omega^+$. If $p>0$, we set $\Omega^+_p$ to be the set of weights $\nu = \sum a_i\om_i$ with $0\leq a_i<p$. If ${\rm char}(F)=0$ we let $\Om_p^+ = \Om^+$. 
The weights of $\Omega^+$, respectively $\Omega^+_p$ are called {\it dominant}, respectively $p$-{\it restricted} weights (so that if ${\rm char}(F) = 0$, all
dominant weights are by definition $p$-restricted weights).
 An \ir $G$-module is called $p$-{\it restricted} if its highest weight is $p$-restricted.

There is a standard partial ordering of elements  of $\Omega$; specifically, for $\mu,\mu'\in\Omega$ we write
$\mu\prec\mu'$ and $\mu'\succ \mu$ \ii $\mu'-\mu\in R^+$ and $\mu\ne \mu'$. We write $\mu\preceq\mu'$ or $\mu'\succeq\mu$  if and only
if $\mu'-\mu\in R^+$.   For $\mu,\nu\in\Omega^+$, with
$\mu\preceq \nu$, we say $\mu$ \emph{is subdominant to} $\nu$.  Every \ir $G$-module $V$ has a weight $\om$ such that $\mu\prec\om$ for every $\mu\in\Omega(V)$ with $\mu\neq \om$.
This is called the {\it highest weight of} $V$. There is a bijection between $\Omega^+$ and the set of isomorphism classes of irreducible $G$-modules,
so for $\om\in\Omega^+$ we denote by $V_\om$ the \ir $G$-module with highest weight $\om$.
The zero weight space of $V$ is sometimes denoted by $V^T$. In general, for $S\subseteq G$ we set $V^S=\{v\in V: sv=v$ for all $s\in S\}$.
The maximal root of $\Phi(G)$ is denoted by $\om_a$; this is the highest weight
of the adjoint module ${\rm Lie}(G)$ and of a composition factor $V_a$ of ${\rm Lie}(G)$.

Because of natural isomorphisms between certain indecomposable root systems, we will assume throughout that the rank of $G$ is as
in Table~\ref{tab:rank} below.
\begin{table}[h]
$$\begin{array}{|l|c|}\hline
G& {\rm rank}(G) \\
\hline
A_n&n\geq 1\\
\hline
B_n&n\geq 3 \\
\hline
C_n&n\geq 2\\
\hline 
D_n&n\geq 4\\
 \hline

\end{array}$$
\caption{{\rm rank}(G)}\label{tab:rank}
\end{table}
For brevity we write $G=A_n$ to say that $G$ is a simple simply connected algebraic group of type $A_n$, and similarly
for the other types. For classical groups $G$ the module with highest weight $\om_1$ is called {\it the natural module}, with one exception,
namely the natural module for $G=B_n$ when $p=2$ is a reducible $(2n+1)$-dimensional module with two composition factors $V_{\om_1}$ and $V_0$.

In our proofs, we regularly use so-called ``Bourbaki weights'', which are elements of a $\ZZ$-lattice containing $\Omega$ with basis   $\ep_1,\ep_2,...$;
the explicit expressions of the fundamental weights and the simple roots of $G$ in terms of the $\ep_i$ are given in \cite[Planches 1 -- XIII]{Bo}.

 If $h: G\ra G$ is a surjective algebraic group homomorphism  and $\phi$ is a representation of $G$ then the $h$-twist $\phi^h$ of $\phi$ is defined as the mapping $g\ra \phi(h(g))$ for $g\in G$. Of fundamental importance
 is the Frobenius mapping $Fr: G\ra G$ arising from the mapping $x\ra x^p$ $(x\in F)$ when $p>0$. If $V$
 is a  $G$-module then, for integers $k\geq 0$, the modules $V^{Fr^k}$ are called {\it Frobenius twists of} $V$; if $V$ is \ir with highest weight $\om$ then the
 highest weight of
 $V^{Fr^k}$ is $p^k\om$. Every $\om=\sum a_i\om_i\in\Omega^+$ has a unique $p$-adic expansion
 $\om=\lam_0+p\lam_1+\cdots +p^k\lam _k$ for some $k$, where $\lam_0\ld \lam_k\in\Omega_p^+$.
 The Steinberg tensor product theorem then implies that  $V_\om\cong V_{\lam_0}\otimes V_{\lam_1}^{Fr}\otimes\cdots \otimes V_{\lam_k}^{Fr^k}$.

Finally, we recall that  a matrix $M\in M_{n\times n}(F)$ is called \emph{cyclic} if it is diagonalisable and all \ei multiplicities equal $1$; we will say $M$ is \emph{almost cyclic}, if it is diagonalizable and 
at most one \ei \mult is greater than $1$. This leads us to introduce the following terminology to be used throughout.
\begin{defi}\label{def:ac} Let $s\in T$ and let $V$ be a $G$-module with associated representation $\phi:G\to {\rm GL}(V)$. We say that $s$ is \emph{cylic on $V$}, respectively
  \emph{almost cyclic on $V$} if $\phi(s)$ is cyclic, respectively almost cyclic. 

  \end{defi} 



\section{Preliminaries}

We start by recalling a characterization of regular semisimple elements, as mentioned in the Introduction. We will use this
frequently without direct reference. 

\begin{lemma}\label{re3} {\rm \cite[Ch. III, \S 1, Corollary 1.7]{Spr}} Let $G$ be a semisimple algebraic group, $T\leq G$ a maximal torus  and $s\in T$.
  Then the \f conditions are equivalent:\begin{enumerate}[]

\item{\rm{(1)}} $s$ is regular;
\item{\rm{(2)}} if $gs=sg$ for $g\in G$, then g is semisimple;
\item{\rm{(3)}} for all roots $\al\in\Phi(G)$, $\al(s)\ne 1$.
  \end{enumerate}\el

The following lemma concerns assertion (*) in the Introduction, rephrased in terms of Definition~\ref{def:ac}.

\begin{lemma}\label{ss2} Let
 $V,M$ be non-trivial G-modules and  $s\in T$.

 $(1)$ If $s$ is  cyclic on $V$, then $s$ is regular.

 $(2)$ If $s$ is  almost cyclic on $V\otimes M$, then $s$ is regular.\el

 \begin{proof} (1) Let $\rho$ be the \rep afforded by $V$. If $s$ is  cyclic on $V$ then
   $C_{\GL(V)}(\rho(s))$ consists of semisimple elements. So  the claim follows from Lemma \ref{re3}(2).

   (2) By \cite[Lemma 1]{TZ21}, $s$ is cyclic on $V$, so the result follows from (1).\end{proof}

\begin{defi}\label{se1} Let $V$ be a $G$-module, $\mu,\nu\in\Omega(V)$, and $s\in T$. We say that $s$
  \emph{separates the weights} $\mu,\nu$ of $V$ if $\mu(s)\neq\nu(s)$. If this holds for every pair of weights
  $\mu,\nu\in\Om(V)$, we say that $s$ \emph{separates the weights of} $V$. \end{defi}

If $s$  separates the weights of $V$ then the \ei multiplicities of $s$ acting on $V$ are simply the weight
multiplicities of $V$.

 \begin{lemma}\label{td2} 
 Let $V,V_1, V_2$ be non-trivial G-modules.
Let $s\in T\setminus Z(G)$ and assume that $s$ is almost cyclic on $V$.\begin{enumerate}[]

\item{\rm{(1)}} \St $V=V_1\otimes V_2$. Then $s$  is cyclic on $V_1$ and on $V_2$, all weights of $V_1$ and $V_2$ are of multiplicity $1$, and $s$ is regular.

\item{\rm{(2)}}Suppose that $\Om(V_1)+\Om(V_2)=\Om (V)$. Then $s$ separates the weights of $V_i$, for $i=1,2$.\end{enumerate}

\el

\begin{proof}  This follows from \cite[Lemma 1]{TZ21} and \cite[Lemma 3]{TZ21}.\enp

The set of weights of a $G$-module $V$ determines the action of a given semisimple element on the
  module $V$. We will rely on the following important result \cite[Theorem 1]{Pr} describing precisely, in many cases, the set $\Omega(V)$. We require an additional notation, defined in {\it{loc.cit.}}; we write $e(G)$ for the maximum of the squares of the ratios of the lengths of the roots in $\Phi(G)$. So $e(G)=1$ for $G$ of
  types $A_n,D_n,E_n$, $e(G)=2$ for $G$ of type $B_n,C_n,F_4$ and $e(G)=3$ for $G$
of type $G_2$.

\begin{theo}{\cite[Theorem 1]{Pr}}\label{premet}  Assume that $p=0$ or $p>e(G)$. Let $\lambda$ be a $p$-restricted weight. Then
$\Omega(V_\lambda)=\{w(\mu)\ |\ \mu\in \Omega^+, \mu\preceq\lambda,w\in W\}.$\end{theo}

As an immediate Corollary of Theorem~\ref{premet}, we have:

\begin{lemma}\label{wtlattice}
  Assume that $p=0$ or $p>e(G)$. Let $\lambda,\mu\in \Omega^+$  with $\lambda$ $p$-restricted, and $V_\lambda$,
  respectively, $V_\mu$ the associated irreducible $G$-modules. Then the following hold.\begin{enumerate}[]
\item{\rm{(1)}} If $\mu\prec\lambda$ then    $\Omega(V_\mu)\subseteq \Omega(V_\lambda)$.
\item{\rm{(2)}} If $\lambda+\mu$ is $p$-restricted then   $\Omega(V_{\lambda+\mu})=\Omega(V_\lambda\otimes V_\mu)=\Omega(V_\lambda)+\Om( V_\mu)$.
\item{\rm{(3)}} If $\lam$ is radical then some root is a weight of $V_\lam$; otherwise $\Omega(V_\lam)$ contains some minuscule weight.\end{enumerate}\end{lemma}

We begin our considerations of strongly regular elements and their action in certain representations by considering the action on $V_a$, the
irreducible $G$-module with highest weight the highest root.

  \bl{sa1}  Assume that $p=2$ if $G=A_1$, $p\ne 3$ if $G=A_2$ or $G_2$, and $p\ne 2$ if $G=B_n$,  $C_n$ or $F_4$. Let $s\in T\setminus Z(G)$. Then $s$ is strongly
    regular \ii  $s$ is almost cyclic on $V_a$.\el

\bp   If $G=A_1$ and $p=2$, then all non-identity semisimple elements of $G$ act cycically on $V_a$ (which is just a twist of the natural representation of $G$) and all
non-identity semisimple elements are strongly regular. So here the statement is clear.

Now turn to the other cases and suppose that  $s$ is strongly regular, so that $\alpha(s)\ne \beta(s)$ for roots $\alpha,\beta\in\Phi(G)$, with $\alpha\ne \beta$.
Then, with one exception, the hypothesis of (1), together with Theorem~\ref{premet}, implies that $\Omega(V_a) = \Phi(G)\cup\{0\}$ and so $s$ is almost cyclic on $V_a$. The exception occurs when $G=G_2$ and $p=2$. But here as well, we have $\Om(V_a) = \Phi(G)\cup\{0\}$; see for example \cite{Lu1}. 

Now suppose that $s$ is almost cyclic on $V_a$ (and ${\rm rank}(G)>1$).  Then, in all cases considered here, the zero weight has multiplicity at least 2 on $V_a$.
Thus $\alpha(s)\ne \beta(s)$ for all roots $\alpha,\beta\in\Phi(G)$, with $\alpha\ne \beta$. That is, $s$ is strongly regular.\end{proof}

\begin{rem}\label{rr3} {\rm The exceptions in
    Lemma ~\ref{sa1} are genuine.  Indeed, if $p\ne 2$ and $G={\rm SL}_2(F)$, then elements of $T$ which are not strongly regular are of the form $s={\rm diag}(c,c^{-1})$ with $c^4=1$,
since $\alpha(s) = c^2$. We choose $c$ such that $c^2=- 1$ and then $s$ is non-central, regular,  and almost cyclic on $V_a$.
If $G=C_n, n\geq 2$, with $p=2$,
then $\om_a=2\om_1$. Then $V_a$ is the Frobenius twist of $V_{\om_1}$, so $s$ is almost cyclic  on $V_a$  \ii $s$ is almost cyclic
$V_{\om_1}$.
However, it is easy to construct non-regular elements $s\in T$  that are almost cyclic  on $V_{\om_1}$.  For $G=G_2$, $p=3$, see Proposition~\ref{2g3}(4), and for
$G=F_4$, $p=2$, see Remark \ref{nr2}(1). For $G=A_2$ with $p=3$ take $s=\diag(a,-a,-a^{-2})$ for $a\in F^\times$, so $\al_1(s)=(-\al_1)(s)$. Then the Jordan normal
form of $s$ on $V_a$ is $\diag(1,-1,-1,a^3,-a^3,a^{-3},-a^{-3}).$ This matrix is almost cyclic if $a^{4}\neq 1$. Finally, for $G=B_n,n\geq3,$ and $p=2$, see Lemma~\ref{sr3} and Remark \ref{b23} }.\end{rem}

Lemma~\ref{ss4} and Remark~\ref{r22}  will not be required in what follows, but give some additional information about the relationship between
a semisimple element being regular and its action on $V_a$.

\begin{lemma}\label{ss4} Let $s\in T$. Let $\om$ be the highest short root and $V = V_\om$.\begin{enumerate}[]
  \item{\rm{(1)}} If $s$ is regular, then $V^s = V^T$ and 
 $V_a^s=V_a^T$.
\item{\rm{(2)}} Suppose that $V_a^s = V_a^T$. If $p>e(G)$ or if $G=G_2$ and $p=2$, then $s$ is regular.
  \end{enumerate}\el

 \begin{proof} By Lemma \ref{re3}, $s$ is regular \ii $\al(s)\neq 1$ for all $\alpha\in\Phi(G)$.
  For (1), we use the fact that all non-zero weights of $V_{a}$ and $V$ are roots.  
   (2)   
   Using Theorem~\ref{premet}, one observes that for $p>e(G)$,  $\Omega(V_a) = \Phi(G)\cup\{0\}$.
   As mentioned in the proof of Lemma~\ref{sa1}, this also is true for
  $G=G_2$, when $p=2$.
  So $V_a^T= V_{a}^s$ implies $\al(s)\neq1$ for  $\al\in\Phi(G)$, and hence $s$ is regular.\end{proof}

\begin{remk}\label{r22}{\rm If $p=e(G)$, there exists a non-regular element $s\in T$, such that $V_a^s = V_a^T$.  
  See Proposition~\ref{2g3}(5), for $G=G_2$ with $p=3$ and Remark~\ref{nr2}(2) for $G=F_4$ with $p=2$.  Now let $G=C_n$ with $p=2$. Then
  $V_a=V_{2\om_1}$ and hence 
  $\Om(V_a) = \{\pm 2\ep_1,\ldots ,\pm 2\ep_n\}$. So $V_a^T=\{0\}$, and $ V_a^s=V_a^T$ \ii $ V_a^s=\{0\}$, equivalently, \ii 1 is not an \ei of $s$ on $V_{\om_1}$.
  As
  $n\geq 2$, there are non-regular elements $s\in T$ such that $V_{\om_1}^s = \{0\}$. 
 For $G=B_n$, when $p=2$, the non-zero weights of $V_a$ are $\{\pm\ep_i\pm \ep_j\ |\ 1\leq i<j\leq n\}$. Choose $s$ such that $\ep_1(s)=1$, so $s$ is not regular, but $(\ep_i\pm\ep_j)(s)\ne 1$ for all $i\ne j$. }\end{remk}


\section{Classical groups}

In this section we prove Theorems~\ref{th1}, \ref{re1} and \ref{rr4}  for classical groups.
In view of Theorem \ref{ag8}, we must examine the modules $V_\om$, for $\om\in\Om_p^+$,  all of whose
non-zero weights are of \mult 1. These are listed in Tables~\ref{tab:omega1} and \ref{tab:omega2}.
Recall that we have fixed a maximal torus $T$ of $G$, and we assume that our semisimple element belongs to $T$.

\subsection{Groups of type $A_n$}\label{sec:An}

Throughout this subsection, $G$ is of type $A_n$.

\bl{ww2} Let $\om\in\Om^+$ and $n\geq 2$.\begin{enumerate}[]

\item{\rm{(1)}} If $\om\succ 2\om_1$ then $\om\succeq \om_1+\om_2+\om_n$.
\item{\rm{(2)}} If $\om\succ \om_1+\om_n$, then $\om\succeq \om_2+\om_{n-1}$ if $n\geq 3$ and $\om\succeq \nu$, for some
$\nu\in\{ 3\om_1, 3\om_2\}$ if $n=2$.\end{enumerate}\el

\bp Let $\om\succ 2\om_1$. By \cite[Lemma 3.2]{Z18}, we can assume that $\om=2\om_1-\om_{j-1}+\om_j+\om_k-\om_{k+1}$ for
some $j, k\in\{1\ld n\}$, $j\leq k$.  (We set $\om_0=0=\om_{n+1}$.) \itf $k=n$ and $j\leq2$, and in both cases (1) easily follows.

Now, let $\om\succ\omega_1+\omega_n$. As above,  we can assume that $\om=\om_1+\om_n-\om_{j-1}+\om_j+\om_k-\om_{k+1}$.
Then $j\in\{1,2\}$ and $k\in\{n-1,n\}$. If $n\geq 3$, then $\om\in\{2\om_1+2\om_n,\om_2+\om_{n-1 },\om_2+2\om_n, 2\om_1+\om_{n-1} \}$. As
$2\om_1+2\om_n\succ \om_2+2\om_n\succ \om_2+\om_{n-1}$ and $2\om_1+\om_{n-1}\succ \om_2+\om_{n-1}$, the result  follows. If $n=2$ then $\om\in\{2\om_1+2\om_2,
3\om_2, 3\om_1 \}$ and $2\om_1+2\om_2\succ 3\om_i$, $i=1,2$, and the result of (2) follows.\end{proof}

\begin{propo}\label{ac9} Let $G=A_n$, with $n\geq 1$, $s\in T_{reg}$ and let 
$\om\in\Om^+_p$.\begin{enumerate}[]
  \item{\rm{(1)}} Assume that $\om\notin\{ 0,\om_1\ld \om_n, 2\om_1,2\om_n\}$ and that $s$ is almost cyclic on $V_\om$. 
Then $s$ is strongly regular, 
 unless   $(n,p)=(2,3)$ and $\om=\om_1+\om_2$.

\item{\rm{(2)}} If $\om\in\{ 2\om_1,2\om_n\}$ and $s$ is cyclic on $V_\om$, then $s$ is strongly regular.
\item{\rm{(3)}} If $(n,p) = (2,3)$ and $\om=\om_1+\om_2$, with $s$ cyclic on $V_\om$, then $s$ is strongly regular.\end{enumerate}\end{propo}

\bp  Set $V=V_\om$. Recall that the $T$-roots of $G$ are $\ep_i-\ep_j$ for $i,j\in\{1\ld n+1\}$, $i\ne j$. Then $s$ regular implies that
$\alpha(s)\ne 1$ for all $\alpha\in\Phi(G)$. 

For (1), suppose the contrary, that is, $s$ is almost cyclic on $V_\om$ and $s$ is not strongly regular, so that  $\al(s)= \be(s)$ for some roots $\al\ne\be$.
As $W$ acts transitively on the roots, we can assume that $\al=\al_1=\ep_1-\ep_2$.

If $n=1$ then  $\be=-\al_1$ and $\al(s)= \be(s)$ implies $(2\al_1)(s)=1$, whence $\al_1(s)=-1$. Let $\om=a\om_1$, where by hypothesis
$3\leq a\leq p-1$. Then the weights of $V$ are $\om-i\al_1$ for $0\leq i\leq a $, and $(\om-i\al_1)(s)=(-1)^i\om(s)$, contradicting that $s$ is
almost cyclic on $V$.

Now let $n\geq 2$. Recall that $W$ acts on $\ep_1\ld \ep_{n+1}$ as the symmetric group $S_{n+1}$ does. Since $\be=\ep_i-\ep_j$ for some $i\neq j$, we can assume
that
$\be\in\{-(\ep_1-\ep_2),\pm(\ep_1-\ep_3),\pm(\ep_2-\ep_3),\ep_3-\ep_4\}$
 (where $\ep_3-\ep_4$ occurs
 only if $n\geq 3$). As $s$ is regular, $\al-\be$ is not a root, so $\be\notin\{\ep_1-\ep_3, -\ep_2+\ep_3\}$,  and hence
 $\be\in\{-(\ep_1-\ep_2),-(\ep_1-\ep_3),\ep_2-\ep_3,\ep_3-\ep_4\} = \{-\alpha_1,-\alpha_1-\alpha_2,\alpha_2,\alpha_3\}$. In addition,
  as $s$ is regular,  if $\be=-\al$ then $p\neq 2$ and $\al_1(s)=-1$.

  By   \cite[Lemma 11]{TZ21} (see Definition 3, {\it{loc.cit.}}) and Theorem \ref{premet}, there exists a weight
  $\mu\in \Omega(V)\cap\{\om_1+\om_i,\om_i+\om_n: i=1\ld n\}$ 
  and all weights subdominant to $\mu$ also lie in $\Omega(V)$.
  We consider in turn each possibility for  the weight $\mu$.

Suppose first that   $\mu:=\om_1+\om_i$ for $1< i<n$.
 Then   $\mu-\alpha_1-\alpha_2-\cdots-\alpha_i = \om_{i+1}\in\Om(V)$ and $\om_{i+1}-\al$, $\om_{i+1}-\be\in W\mu$. In addition,
 $\om_{i+1}-\al_{i+1}-\alpha$ and $\om_{i+1}-\alpha_{i+1}-\beta\in\Om(V)$, as long as $\beta\in\{-\alpha_1,\alpha_2\}$ when $i=2$.  Since $s$ is almost cyclic on $V$,
 we deduce that
 either $i=2$ and $\beta\in\{-\alpha_1-\alpha_2,\alpha_3\}$ or $\om_{i+1}-\alpha = \om_{i+1}-\alpha-\alpha_{i+1}$, so $\alpha_{i+1}(s)=1$, contradicting the fact that $s$ is regular.  So it remains to consider the
 case where
 $i=2$, $\mu=\om_1+\om_2$, $n\geq 3$, and $\beta\in\{-\alpha_1-\alpha_2,\alpha_3\}$. We give the details for the case $\beta=-\alpha_1-\alpha_2$. Here
 $\eta = \mu-\alpha_1-2\alpha_2-\alpha_3\in\Om(V)$ and
 $\eta-\alpha,\eta-\beta\in\Om(V)$. Hence we deduce as above that $\om_{i+1} = \eta$ and so $(\alpha_2+\alpha_3)(s) = 1$, contradicting that $s$ is regular. The
 case $i=2$ and $\beta=\alpha_3$ is entirely similar.

 Suppose now that $\mu=\omega_1+\omega_n$. Then all roots are weights of $V_\om$, and since $\al(s)=\be(s)$, we have  $(-\al)(s)=(-\be)(s)$,
 whence $ \al(s)=\be(s)=-1$ and
 $(\al+\be)(s)=1$.
Then $\al+\be$ is not a root as $s\in T_{reg}$. Therefore,
$\be\in\{-\alpha_1,\alpha_3\}$.  If $\be=\al_3$, so $n\geq 3$, then $\al_1+\al_2+\al_3$ is a root and $(\al_1+\al_2+\al_3)(s)=\al_2(s)$. As
before, we deduce that $\al_2(s)=\al_1(s)=-1$, and then $(\al_1+\al_2)(s)=1$, whence  a contradiction.
So we are left with the case where $\beta=-\alpha$ and $\al(s)=(-\al)(s)=-1$.  By Lemma \ref{sa1} (and the hypothesis of (1)), we may assume that $\om\neq \om_1+\om_n$.
 By Lemma \ref{ww2}, $\om\succeq \nu$, where
    $\nu\in\{ 3\om_1, 3\om_2\}$ if $n=2$, and   $\nu= \om_2+\om_{n-1}$ if $n\geq 3$.
    As usual, Theorem~\ref{premet} implies that all weights subdominant to $\nu$ lie in $\Omega(V)$.

    Now if $n\geq 3$, so $\nu = \omega_2+\omega_{n-1}$, then set $\eta = \nu-\alpha_1-2\alpha_2-\alpha_3-\cdots-\alpha_{n-1}$. Note that $\eta\pm\alpha_1$ and
    $\eta-\alpha_n\pm\alpha_1$ lie in $\Omega(V)$. But then we deduce that $\alpha_n(s) = 1$, giving the usual contradiction. If $n=2$ and $\nu = 3\omega_1$, we
    argue similarly, setting $\eta = \nu-\alpha_1$, and deduce that $\alpha_2(s) = 1$. The case $\nu=3\omega_2$ is analogous.

    Note that the cases considered above cover also the weights $\mu = \omega_i+\omega_n$, for $2\leq i\leq n$, by considering the dual $V^*$; so (again invoking
    duality), it remains to consider the case $\mu = 2\omega_1$. Then Lemma~\ref{ww2} shows that $\omega\succeq \omega_1+\omega_2+\omega_n$, which then implies
    that  $\omega\succ \omega_3+\omega_n$, if $n\geq 3$, and $\omega\succeq \omega_1+2\omega_2$, if $n=2$. In the former case the result follows from previously
    considered cases if $n\geq 4$.  For the case $n=3$ and $\omega\succeq \omega_1+\omega_2+\omega_3$, we consider the different possibilities for $\beta$,
    as before; recall that $\alpha = \alpha_1$ and $\beta\in\{-\alpha_1,-\alpha_1-\alpha_2,\alpha_2,\alpha_3\}$. Setting $\nu = \omega_1+\omega_2+\omega_3$, and
    $\eta = \nu-\alpha_1-\alpha_2-\alpha_3$ and $\eta' = \nu-\alpha_1-2\alpha_2-\alpha_3$, we have that
    $\eta-\alpha, \eta-\beta,\eta'-\alpha,\eta'-\beta\in\Om(V)$ and deduce that $\eta(s) = \eta'(s)$ and so $\alpha_2(s) = 1$ giving the usual contradiction. Finally, let $n=2$, and set $\nu = \omega_1+2\omega_2$ and assume that $\omega\succeq\nu$. We argue as in the previous cases. As before, since $s$ is regular and
    $\alpha(s) = \beta(s)$, $\alpha-\beta$ is not a root. So taking $\alpha = \alpha_1$, we have that $\beta\in\{-\alpha_1,\alpha_2,-\alpha_1-\alpha_2\}$.
    Set $\eta = \nu-\alpha_1-\alpha_2$ and $\eta' = \nu-2\alpha_1-2\alpha_2$. Then one checks that $\eta-\alpha, \eta-\beta,\eta'-\alpha,\eta'-\beta\in\Omega(V)$.
    So we deduce  that $\eta(s) = \eta'(s)$ and so $(\alpha_1+\alpha_2)(s) = 1$, a contradiction.

    Turning to the proof of (2), we assume now that $s$ is regular and cyclic on $V = V_\omega$, with $\omega\in\{2\omega_1,2\omega_n\}$. By
    duality, we may assume that $\omega=2\omega_1$; note that $p\ne 2$.  Suppose the contrary, that is, $s$ is not strongly regular, and let $\al,\be$ be as in
    the proof of (1). Here, we only need to find distinct weights $\eta,\eta'\in\Om(V)$ such that $\eta(s) = \eta'(s)$. It is
    straightforward to find such a pair for each choice of $\beta$ (recall that $\alpha=\alpha_1$). For example, if $\beta = -\alpha_1$, then $2\alpha_1(s) = 0$
    and so we may take $\eta = \omega$ and $\eta' = \omega-2\alpha_1$. Or if $\beta = \alpha_2$ (so $n\geq 2$), we take $\eta = \omega-2\alpha_1$ and
    $\eta' = \omega-\alpha_1-\alpha_2$. The remaining cases are left to the reader.

    To conclude, we turn to (3). Here, as $\Om(V) = \{0\}\cup\Phi(G)$, and $s$ is cyclic on $V$, we see that $\alpha(s)\ne \beta(s)$ for all
    $\alpha,\beta\in\Phi(G)$, $\alpha\ne \beta$; thus $s$ is strongly regular as claimed.\end{proof}

  \begin{corol}\label{cor:an} Theorem~$\ref{th1}$ and Theorem~$\ref{re1}$ hold for $G=A_n$.

  \end{corol}

  \begin{proof} For Theorem~\ref{th1}, we assume that $s\in T$ is cyclic on $V_{\omega}$, for some $\om\in\Om_p^+$,
    $\om\ne 0$.  In addition, we assume that either $p=2$ when $n=1$ or  $\om\not\in\{\om_1,\dots,\om_n\}$. For $n=1$, we have $\om=\om_1$ and $s\in T$ is cyclic on $V_\om$ if and
    only if $s\ne 1$, if and only if $s$ is strongly regular, as claimed. So we now assume $n\geq 2$;  Theorem~\ref{c99} implies that $s$ is
    regular and then we use Proposition~\ref{ac9} to see that $s$ is strongly regular.

    For Theorem~\ref{re1}, we assume that $s\in T_{reg}$ is almost cyclic on $V_\om$, for some $\om\in\Om_p^+$, $\om\ne 0$. We suppose further that either $p=2$ if $n=1$ or 
    $\om\not\in\{2\om_1,2\om_n,\om_1,\dots,\om_n\}$. In addition, we assume that if $n=2$ and $\om=\om_1+\om_2$, then $p\ne 3$.
    Now if $n=1$, with $p=2$ and so $\om=\om_1$, then $s$ is almost cyclic on $V_{\om}$ if and only if $s\ne 1$, if and only if $s$ is cyclic on $V_{\om}$, if and only if $s$ is strongly regular. For $n\geq 2$, Proposition~\ref{ac9} gives the result.\end{proof}

We now investigate the weights in Theorem~\ref{th1}(2) and Theorem~\ref{re1}(iii).

\bl{ce1} Let $G=A_n$ and $\omega\in\Omega_p^+$, $\om\ne 0$.
\begin{enumerate}[]

\item{\rm{(1)}} Assume that $p\ne 2$ if $n=1$. If
$\om\in\{\om_1\ld \om_n\}$, then there exists $s\in T$ with $s$ not  strongly regular and $s$ cyclic on $V_\omega$.

\item{\rm{(2)}} If $\om\in\{ 2\om_1,2\om_n\}$,
then there exists  $s\in T_{reg}$, with $s$ not strongly regular and $s$ almost cyclic on $V_\omega$.\end{enumerate}\el

\bp (2) Here we have $p\ne 2$ and the result follows from Remark~\ref{rr3} when $n=1$. So we now take $n\geq 2$. By duality, it suffices to consider $\om=2\om_1$.
Choose $T\leq \SL_{n+1}(F)$ to be the group of diagonal matrices $t=\diag(t_1\ld t_{n+1})$ of determinant $1$, and set
$\ep_i(t)=t_i$,
for $i=1\ld n+1$.

Suppose first that
$n\geq 3$. Let $s=\diag(a^3,a^2,a, b_1\ld b_{n-2})\in \SL_{n+1}(F)$ for some $a,b_1\ld b_{n-2}\in F^\times$.  Then $\al_1(s)=\al_2(s)=a$ so $s$ is not strongly regular.
The weights of $V_\om$ are $2\ep_i$ and $\ep_i+\ep_j$ for $i,j\in\{1\ld n\}$, $i\neq j$. So the \eis of $s$ on $V$ are $a^6,a^4,a^2,b_i^2$
for $i=1\ld n-2$ and $a^5,a^4,a^3,a^kb_i, b_ib_j $ for $1\leq i<j\leq n-2$, $k=1,2,3$. It is straightforward to see that
there exist $a,b_1\ld b_{n-2}\in F^\times$ such that $a^4$ is  the only \ei of $s$ on $V$ of \mult greater than 1, whence the result in this case.

If $n=2$, then let $s=\diag(a,-a ,-a^{-2})$. Then $\al_1(s)=(-\al_1)(s)=-1$ and the \eis of $s$ on $V_{2\om_1}$
are $a^2,a^2,a^{-4}, -a^2,-a^{-1},a^{-1}$, so $s$ is not strongly regular and almost cyclic  on $V_{\om}$ for a suitable choice of $a\in F^\times$. 

(1) A similar construction can be used when $\om\in\{\om_1\ld \om_n\}$. The assumption on the characteristic in case $n=1$ is
necessary as all non-identity semisimple elements in ${\rm SL}_2(F)$ are strongly regular when ${\rm char}(F) = 2$. 
 \enp

 We note that Remark~\ref{rr3} and Lemma~\ref{ce1} establish the statements of Theorem~\ref{rr4} for the groups of type $A_n$.

\subsection{Groups of type $B_n$, $C_n$ and $D_n$}\label{82}

In this section, we prove the main results for classical groups  not of type $A_n$. Recall that we assume that $n\geq 3$ when $G=B_n$,
$n\geq 2$ when $G = C_n$, and $n\geq 4$ when $G = D_n$. 

The weights of $V_{\om_1}$ are $\pm \ep_1\ld \pm \ep_n$ and, additionally, 0 when $G$
is of type $B_n$ and $p\ne 2$. So every  element $s\in T$ can be written as $s=\diag(a_1\ld a_n,1,a_n\up\ld a_1\up)$, with respect to
some basis of $V_{\om_1}$,  where 1 must be dropped if $G= B_n$ with $p=2$.

\bl{dn33}  Let $G=B_n$, $n\geq 3$,  or $D_4$, $ n\geq 4$, or $G=C_n$, $n\geq 3$ and $p=2$. Let $\om\in\Om_p^+$, $\om\ne 0$,  and  $s\in T_{reg}$. Suppose that $s$ is almost cyclic on $V_\om$.
Then one of the following holds:
\begin{enumerate}[]
\item{\rm{(i)}} $s$ is strongly regular,
\item{\rm{(ii)}} $G=B_n$ and $\om\in\{\om_1,\om_n\}$,
  \item{\rm{(iii)}} $G=B_n$, $p=2$ and $\om=\om_2$,
  \item{\rm{(iv)}} $G=C_n$ and $\om\in\{\om_1,\om_2,\om_n\}$, or
  \item{\rm{(v)}} $G=D_n$ and  $\om\in\{\om_1,\om_{n-1},\om_n\}$.  \end{enumerate}\el

\begin{proof} By Theorem \ref{ag8}, all non-zero weights of $V_\om$ are of \mult 1. The non-zero $p$-restricted \ir $G$-modules with this property
  are listed in Tables~\ref{tab:omega1} and \ref{tab:omega2}. 
We find that 
$\om\in\{0,\om_{1},2\om_{1},\om_{2},\om_{n}\}$ for $G=B_n$,
$\om\in\{0, \om_{1},2\om_{1},\om_{2},\om_{n-1},\om_{n}\}$ for $G=D_n$, and $\om\in\{0,\om_1,\om_n,\om_2\}$ for $G=C_n$.
Lemma \ref{sa1} then gives the result unless
$G = B_n$ or $D_n$ and $\om=2\om_1$.

So suppose that $p\ne 2$, $\om=2\om_1$ and $s$ is almost cyclic on $V_{\om}$. Note that $\om_2=2\om_1-\al_1$, so by
Lemma \ref{wtlattice}(1), $\Om(V_{\om_{2}})\subset \Om(V_{2\om_{1}})$. Moreover, the $0$ weight of $V_{2\om_1}$ has multiplicity strictly greater than 1.
(See Table~\ref{tab:omega2}.)
Finally, as $V_a = V_{\om_2}$ and $p\ne 2$, we have $\Phi(G)\subset \Om(V_{2\om_1})$. Now suppose that $s$ is not strongly regular; then Lemma~\ref{sa1}
implies that $s$ is not almost cyclic on $V_{\om_2}$, and since here as well the 0 weight occurs with multiplicity at least $2$, we have that
there exist $\alpha,\beta\in\Phi(G)$, $\alpha\ne \beta$ with $\al(s) = \be(s)\ne 1$. But this then implies that $s$ is not almost cyclic on
$V_{2\om_1}$, giving the result.  \end{proof}

\begin{corol}\label{cor:class} Theorem~$\ref{th1}$ and Theorem~$\ref{re1}$ hold for $G=B_n$, $n\geq 3$, $G = D_n$, $n\geq 4$, and $G=C_n$ with $n\geq 3$ and
  $p=2$. 

\end{corol}

\begin{proof} For Theorem~\ref{th1}, we take $s\in T$ and $\om\in\Om_p^+$, $\om\ne 0$, such that $s$ is cyclic on $V_\om$.
  Then all weights of $V_\om$ occur with multiplicity 1 and so $\om$ is as in Table~\ref{tab:omega1}. As all of these
  weights appear in the conclusion of Theorem~\ref{th1}, the result holds.

  For Theorem~\ref{re1}, we take $\omega$ as before and suppose that $s\in T_{reg}$, with $s$ almost cyclic on $V_\om$.
  Then Lemma~\ref{dn33} gives the result.\end{proof}

We now turn to the consideration of the group $G=C_n$, when $p\ne 2$. Our first result gives some information about the
associated weight
lattice $\Om$.

\bl{2om}  Let $\Phi$ be of type $C_n$ and let $\om\in\Om^+$.
  Suppose that $\om\not\in\{0, 2\om_1\}$ and $\om$ is not a fundamental dominant weight. Then one of the following holds:\begin{enumerate}[]
  \item{\rm{(i)}} $\om\succeq \om_1+\om_2$, 
  \item{\rm{(ii)}} $n\geq 3$ and $\om\succeq \om_1+\om_3$, or
    \item{\rm{(iii)}} $n=2$ and $\om\succeq 2\om_2$.\end{enumerate}\el

  \bp   If $\om$ is not radical (so $\om\succ\om_1$ by \cite[Lemma 12]{TZ21}) then the result is available in {\rm \cite[Lemma 2.2]{z20}}. So suppose now that  $\om$
is radical and set $\om=\sum c_i\om_i$, with $\sum c_i\geq 2$. Consider first the case $n=2$. If $c_2\geq2$ then $\om-2\om_2$ is a radical dominant weight, so $\om\succeq 2\om_2$.
If $c_2=1$ then 
$c_1$ is even and $c_1\geq 2$ by hypothesis, so again  $\om\succ\om-\alpha_1 =  (c_1-2)\om_1+2\om_2\succeq 2\om_2$.
So finally, suppose that $c_2=0$, so that $c_1\geq 4$ is even. Then $\om\succeq\om-2\alpha_1= (c_1-4)\om_1+2\om_2\succeq 2\om_2$.

Consider now the case $n\geq 3$.
If $c_1c_3\neq 0$ then $\om-\om_1-\om_3$ is a dominant radical weight, so $\om-\om_1-\om_3\succeq0$,
whence the result. Next suppose that $c_ic_j\neq 0$ for some odd $i<j$, $(i,j)\ne (1,3)$.
As $\om_i\succ\om_{i-2}$ for $i\geq2$, it follows that $\om\succ \om'=\om  +\om_1+\om_3-\om_i-\om_j$, and $\om'$ is a dominant radical weight 
having non-zero coefficient of  $\om_1$ and $\om_3$. By the previous argument, $\om'\succeq \om_1+\om_3$,
whence the result in this case. The same argument works if $c_i\geq 2$ for some odd $i\geq 3$, as $\om_{i-2} = \om_i-\beta$ for some 
$\beta\in\Phi^+$, and so $c_i\om_i = (c_i-1)\om_i + \om_i = (c_i-1)\om_i + \om_{i-2} +\beta$, and we are in the previous case. We are left with the case where $c_i=0$ for all odd $i$ and either $c_i\geq 2$ for some even $i$ or $c_ic_j\ne 0$ for some even $i<j$. In each case, it is a strightforward check to see that
$\om\succeq 2\om_2\succ \om_1+\om_3$ as required.\end{proof}

 \begin{propo}\label{cnn3}
  Let $G=C_n$, $n\geq 3,$ $p\neq 2$, and $s\in T$. Let $\om\in\Om_p^+$, $\om\ne 0$,  and suppose that $s$ is almost cyclic  on $V_{\om}$. Then  $s$ is strongly regular
  unless one of the following holds:\begin{enumerate}[]
  \item{\rm{(1)}} $\om\in\{\om_1,\om_2\}$;
    \item{\rm {(2)}} $n=3,4$ and $\om=\om_n;$ or
\item{\rm {(3)}} $n\geq 4$, $p=3$, and $\om\in\{\om_{n-1},\om_n\}$. \end{enumerate}\end{propo}

\bp We assume that $\om\not\in\{0,\om_1\}$. Then by Theorem \ref{c99}, $s$ is regular, so  $\al(s)\neq 1$ for every root $\al$. By Theorem~\ref{ag8},
$\om$ occurs in Table~\ref{tab:omega1} or \ref{tab:omega2}. The case $\om=2\om_1=\om_a$
is settled in Lemma \ref{sa1}, so we assume as well that $\om\ne 2\om_1$. The other entries of Tables~\ref{tab:omega1} and \ref{tab:omega2} either appear in cases (1) - (3) of the statement
or $p>3$ and  $\om\in\{\om_{n-1}+\frac{p-3}{2}\om_n,~\frac{p-1}{2}\om_n\}$. So consider these remaining configurations. By Theorem~\ref{2om}, either   $\om\succeq\om_1+\om_2$ or
$\om\succeq \om_1+\om_3$. By Lemma~\ref{premet}, we have $\Om(V_{\om_1+\om_2})\subseteq \Om(V)$, respectively $\Om(V_{\om_1+\om_3})\subseteq \Om(V)$

Suppose that $s$ is not strongly regular, so that
we have $\al(s)=\be(s)$ for some $\al,\be\in \Phi(G)$, $\al\neq \be$.
 The roots of $G$ are $\pm 2\ep_i,\pm \ep_j\pm \ep_k$ for $i,j,k\in\{1\ld n\}$. As $s$ is regular,
 we have $\ep_i(s)\neq \pm 1$ and $(\ep_j\pm\ep_k)(s)\ne 1$, for all $i$ and $ j\ne k$. Using the Weyl group, we can assume that $s$ satisfies one of:
 \begin{enumerate}[(i)]
 \item $(2\ep_1)(s)=( 2\ep_2)(s)$,
 \item $(2\ep_1)(s)=( \ep_2-\ep_1)(s)$,
 \item $(2\ep_1)(s)=(\ep_2- \ep_3)(s)$, 
 \item $n\geq 4$ and $(\ep_1- \ep_2)(s)=(\ep_3- \ep_4)(s)$, or
   \item $(2\ep_1)(s) = (-2\ep_1)(s)$.
   \end{enumerate}
We now consider the two cases  $\om\succeq\om_1+\om_2$ or
 $\om\succeq \om_1+\om_3$ separately.

 Suppose first that $\om\succeq\om_1+\om_2\succ \om_1$. Then $V_\om$ has weights $\pm\ep_i$ ($i=1\ld n$) and $\pm2\ep_i\pm\ep_j$, for $1\leq i\ne j\leq n$.
 If (i) holds, $(2\ep_1)(s)=( 2\ep_2)(s)$, so $(2\ep_1\pm\ep_3)(s)=(2\ep_2\pm\ep_3)(s)$.
   Since $s$ is almost cyclic on $V_\om$,  we have $(2\ep_1+\ep_3)(s)=(2\ep_1-\ep_3)(s)$, whence $\ep_3(s)=\pm 1$, contradicting $s$ regular.
   If (ii) holds, $(2\ep_1)(s)=(\ep_2-\ep_1)(s)$ , so that $(2\ep_1-\ep_2)(s) =(-\ep_1)(s)$. As $\pm(2\ep_1-\ep_2)$, $\pm\ep _1$ are weights of $V_\om$,
   it follows that  $(-\ep_1)(s)=-1=\ep_1(s)$,
   again  a contradiction. If (iii) holds, $(2\ep_1)(s)=(\ep_2- \ep_3)(s)$, so that $(2\ep_1-\ep_2)(s) =(-\ep_3)(s)$ and we get a contradiction as in
   (ii).  Note that $\om_1+\om_2\succ \om_3$, so $V_\om$ contains the weights of $V_{\om_3}$. These include the
   weights
   $\pm(\ep_1-\ep_3+\ep_4)$. If (iv) holds, then $(-\ep_1+\ep_3-\ep_4)(s)=(-\ep_2)(s)$, and $(+\ep_1-\ep_3+\ep_4)(s)=\ep_2(s)$; as usual, $s$ almost cyclic on $V_\om$ implies
   that
   $\ep_2(s)=(-\ep_2)(s)$,  again a contradiction. Finally, if (v) holds, so that $(2\ep_1)(s) = (-2\ep_1)(s)$, then $(2\ep_1+\ep_2)(s) = (-2\ep_1+\ep_2)(s)$ and
   $(-2\ep_1+\ep_3)(s) = (2\ep_1+\ep_3)(s)$, which then implies that $\ep_2(s) = \ep_3(s)$ so $(\ep_2-\ep_3)(s)=1$, contradicting that $s$ is regular. 

   Suppose now that $\om\succeq \om_1+\om_3\succ 2\om_1\succ\om_2$. Then $\Omega(V_\lambda)\subseteq \Omega(V_\om)$,  for
   $\lambda\in\{\om_1+\om_3,2\om_1,\om_2\}$;
in
particular,
$\pm2\ep_i\pm\ep_j\pm\ep_k$ are weights of $V_{\om}$ for $i,j,k$ distinct, as are $\pm2\ep_i$ for $1\leq i\leq n$.
If (i) holds then $(2\ep_1)(s)=( 2\ep_2)(s)$ and $(-2\ep_1)(s)=( -2\ep_2)(s)$. As usual, $s$ almost cyclic on $V_\om$ implies that
$(2\ep_1)(s)=(-2\ep_2)(s)$, so that we have $(2\ep_1+\ep_2+\ep_3)(s) =
(\ep_3-\ep_2)(s)$. Similarly, we find that $(-2\ep_1+\ep_2-\ep_3)(s) = (-\ep_2-\ep_3)(s)$ and we deduce that $\ep_3(s) = (-\ep_3)(s)$ contradicting
that $s$ is
regular. 
If (ii) holds, $(2\ep_1)(s)=(\ep_2-\ep_1)(s)$, then $(2\ep_1+\ep_2+\ep_3)(s)=(-\ep_1+2\ep_2+\ep_3)(s)$ and
$(2\ep_1+\ep_2-\ep_3)(s)=(-\ep_1+2\ep_2-\ep_3)(s)$. As $2\ep_1+\ep_2\pm\ep_3$ and $-\ep_1+2\ep_2\pm\ep_3$ are weights of $V_{\om_1+\om_3}$, it
follows that $(2\ep_1+\ep_2+\ep_3)(s)=(2\ep_1+\ep_2-\ep_3)(s)$, whence $(2\ep_3)(s)=1$, a contradiction.
If (iii) holds, $(2\ep_1)(s)=(\ep_2- \ep_3)(s)$, then $(2\ep_1-\ep_2+\ep_3)(s)=1$ and $(2\ep_1-\ep_2-\ep_3)(s)=(-2\ep_3)(s)$. The former equality
means that the \mult of the \ei 1 is greater than 1 and the second equality implies that
$(-2\ep_3)(s)=1$, a contradiction. 
If (iv) holds, $(\ep_1- \ep_2)(s)=(\ep_3- \ep_4)(s)$, $n\geq 4$, then $(2\ep_1-\ep_2-\ep_3)(s)=(\ep_1-\ep_4)(s)$ and
$(-\ep_1+\ep_3-2\ep_4)(s)=
(-\ep_2-\ep_4)(s)$. As  $s$ is almost cyclic on $V_\om$,  we have $(\ep_1-\ep_4)(s)=(-\ep_2-\ep_4)(s)$, whence $\ep_1(s)=(-\ep_2)(s)$, giving the usual
contradiction. Finally, if (v) holds, then $(2\ep_1)(s) = (-2\ep_1)(s)$, both of which occur in $V_{2\om_1}$. We have as well
$(2\ep_1+\ep_2+\ep_3)(s) = (-2\ep_1+\ep_2+\ep_3)(s)$. So we deduce that $(2\ep_1+\ep_2+\ep_3)(s) = (2\ep_1)(s)$ giving $(\ep_2+\ep_3)(s)=1$ and the final contradiction.\end{proof}

\begin{corol}\label{cor:cn1} Theorem~$\ref{th1}$ and Theorem~$\ref{re1}$ hold for $G = C_n$, $n\geq 3$ and $p\ne 2$.\end{corol}

\begin{proof} We first let $s\in T$ and $\om\in\Om_p^+$, $\om\ne 0$ such that $s$ is cyclic on $V_\om$. Then all weights of $V_\om$ occur
  with multiplicity 1 and so $\om$ appears in Table~\ref{tab:omega1}. In addition, Proposition~\ref{cnn3} applies. So either $s$ is strongly regular or one of the following holds:\begin{enumerate}[(i)]
  \item $\om=\om_1$,
  \item $\om  = \om_2$ and $(n,p)=(3,3)$,
  \item $n=3$ and $\om = \om_3$, or
  \item $n\geq 4$, $p=3$ and $\om\in\{\om_{n-1},\om_n\}$.
  \end{enumerate}

  These are precisely the weights listed in the statement of Theorem~\ref{th1}.

  Now for Theorem~\ref{re1}, we let $\om$ be as above, but this time we take $s\in T_{reg}$ and suppose that $s$ is almost cyclic on $V_\om$.
  Then Proposition~\ref{cnn3} applies and we find that either $s$ is strongly regular or $\om$ is as listed in (i) - (iv) above, or $\om = \om_2$, for $(n,p)\ne (3,3)$, or
  $n=4$ and $\om = \om_4$ with $p\ne 3$. Hence we have the result of Theorem~\ref{re1}.\end{proof}

We now consider the group $C_2$.

\bl{c22}
 Let $G=C_2$, 
 and let $s\in T_{reg}$. Then\begin{enumerate}[]

\item{\rm{(1)}} $s$ is cyclic on $V_{\om_1}$ and almost cyclic  on $V_{\om_2}$. If $s$ is not cyclic on $V_{\om_2}$ then
$p\neq 2$ and $-1$ is an  \ei of $s$ on $V_{\om_2}$ of \mult $2$.

\item{\rm{(2)}} There exists $s'\in T_{reg}$, with $s'$ is not strongly regular, and $s'$
 cyclic on $V_{\om_1}$ and on $V_{\om_2}.$

\item{\rm{(3)}} Let $\om\in\Om_p^+$, $p\neq 2$, $\om\not\in\{0,\om_1,\om_2\}$. If $s$ is almost cyclic on $V_\om$, then $s$ is  strongly regular. \end{enumerate}
\el

\begin{proof} The proof of (1) is straightforward and left to the reader.
  For (2), define $s'$ by $\ep_1(s')=a$, $\ep_2(s')=a^3$, where $a\in F^\times$ and $a^{24}\neq 1$.  Then $s'$ is cyclic
  on $V_{\om_1}$ (and hence regular) $s'$ and  is not strongly regular as
  $2\ep_1(s')=(\ep_2-\ep_1)(s')$.
The \eis of $s'$ on $V_{\om_2}$ are $1, a^{\pm2},a^{\pm4}$ (where 1 is to be dropped if $p=2$), so $s'$ is cyclic on $V_{\om_2}$. 

(3) As usual, Theorem~\ref{ag8} implies that $\om$ appears in Table~\ref{tab:omega1} or Table~\ref{tab:omega2}; we have to examine the cases with $p\neq 2$ and $\om\in \{2\om_2,\om_1+\frac{p-3}{2}\om_2,
\frac{p-1}{2}\om_2\}$.

Suppose that $s$ is not strongly regular. As in the proof of Proposition \ref{cnn3}, we can assume that one of the following holds: \begin{enumerate}[(i)]
\item $(2\ep_1)(s)=( 2\ep_2)(s)$,
\item $(2\ep_1)(s)=( \ep_2-\ep_1)(s)$, or
  \item $(2\ep_1)(s) = (-2\ep_1)(s)$. \end{enumerate} By
Lemma \ref{2om}, either $\om\succeq \om_1+\om_2\succ \om_1$ or $\om\succeq 2\om_2$.

Suppose first that $\om\succeq \om_1+\om_2$. Note that
$\pm 2\ep_1\pm\ep_2,\pm \ep_1\pm 2\ep_2,\pm\ep_1,\pm\ep_2\in\Om(V_{\om_1+\om_2})$
If (i) holds, then
$ (2\ep_1-\ep_2)(s) =\ep_2(s)$ and $ (-2\ep_1+\ep_2)(s) =-\ep_2(s)$. As $s$ is almost cyclic  on $V_\om$, it follows that
$\ep_2(s)=-\ep_2(s)$, which is a
contradiction as $2\ep_2$ is a root. If (ii) holds then $(2\ep_1)(s)=( \ep_2-\ep_1)(s)$ implies $ (2\ep_1-\ep_2)(s)=( -\ep_1)(s)$ and
$ (-2\ep_1+\ep_2)(s)=\ep_1(s)$. As  $s$ is almost cyclic on $V_\om$ and $\pm  (2\ep_1-\ep_2)$, $\pm \ep_1\in\Om(V_{\om_1+\om_2})$, it follows that
$ \ep_1(s)= (-\ep_1)(s)$, whence $(2\ep_1)(s)=1$, which contradicts $s$ being regular. Finally, if (iii) holds, so that $(2\ep_1)(s) = (-2\ep_1)(s)$, then $(2\ep_1\pm\ep_2)(s) = (-2\ep_1\pm\ep_2)(s)$ which implies as before that $\ep_2(s) = (-\ep_2)(s)$, giving the usual contradiction.

Suppose  that $\om\succeq 2\om_2$. As $2\om_2\succ 2\om_1\succ \om_2$, we observe that $\pm 2\ep_1\pm2\ep_2$, $\pm 2\ep_1$, $\pm 2\ep_2$, $\pm\ep_1\pm\ep_2$ are
weights of $V_\om$. If (i) holds then $\pm (2\ep_1-2\ep_2)(s)=1$ and hence the \mult of the \ei 1 of $s$ on $V_\om$  is at least 2. In addition,
$(2\ep_1)(s)=( 2\ep_2)(s)$, so $( 2\ep_2)(s)=1 $ as $s$ is almost cyclic  on $V_\om$. This is a contradiction.
If  (ii) holds, that is, $(2\ep_1)(s)=( \ep_2-\ep_1)(s)$ then  $(-2\ep_1)(s)=(- \ep_2+\ep_1)(s)$.
So  $( \ep_2-\ep_1)(s)=-1$ and the \mult of $-1$ as an \ei of $s$ on $V_\om$ is at least 2. As $\pm (2\ep_1-2\ep_2)\in\Omega(V_{2\om_2})$ and
$(2\ep_1-2\ep_2)(s)=1$, we conclude that 1 is an \ei of $s$ on $V_\om$ is of \mult at least 2. This is a contradiction, as $s$ is almost cyclic  on
$V_\om$. Finally, if (iii) holds, then we have $(2\ep_1)(s) = (-2\ep_1)(s)$ , and so $(2\ep_1+2\ep_2)(s) = (-2\ep_1+2\ep_2)(s)$ we deduce that 
$(2\ep_2)(s) = 1$, a contradiction.\end{proof}

\begin{corol}\label{cor:c2}  Theorem~$\ref{th1}$ and Theorem~$\ref{re1}$ hold for $G=C_2$. 

\end{corol}

\begin{proof} Recall that for $s\in T$ and $\om\in \Om_p^+$, $\om\ne 0$, if $s$ is almost cyclic on $V_\om$, then $\om$ appears in
  Table~\ref{tab:omega1} or Table~\ref{tab:omega2}. In particular, for $p=2$, the only weights which need to be considered are $\om_1$ and
  $\om_2$ and both of these are in the statements of the theorems. Henceforth we assume that $p\ne 2$.  If $s\in T$ is cyclic on $V_\om$ for some $\om\in\Om_p^+$, $\om\ne 0$, then again consulting Table~\ref{tab:omega1}, we find that $\om = \om_1$ or $\om=\om_2$, as in the conclusion of
  Theorem~\ref{th1}. So finally, we are left to consider the case where $s\in T_{reg}$, $\om\in\Om_p^+$, and $s$ is almost cyclic on $V_\om$. Then here Lemma~\ref{c22}(3) shows that either $s$ is strongly regular or $\om \in\{0,\om_1,\om_2\}$ as claimed in Theorem~\ref{re1}.\end{proof}

To complete our consideration of the classical type groups, we must establish the existence of elements as claimed in Theorem~\ref{rr4}.

\bl{bn3}
Let $G=B_n$ with $n\geq 3$, or $G=C_n$, with $p=2$ and $n\geq 3$. Then there exists  $s\in T_{reg}$ such that $s$ is not
strongly regular, but cyclic on each of  the modules $V_{\om_1}$ and $ V_{\om_n}$. Moreover, if $G=B_n$, then we may choose $s$ so
that $s$ is cyclic on $V_{\om_1}\oplus V_{\om_n}\oplus \delta_{p,2}V_0$.\end{lemma}

\begin{proof} For $0\leq i\leq n-3$, let  $b_i\in F^\times$ be a
primitive $p_i$-th root of unity where $p_0\ld p_{n-3}$ are distinct primes greater than $11$.
Set  $X=\diag(b_0,(b_0)^3, (b_0)^5,b_1\ld b_{n-3})$.   Now set $t=\diag(X,1,X\up)$, where the 1 is to be
dropped  if $G=C_n$. For a suitable choice of basis for the natural module for $G$, we have $t\in \SO_{2n+1}(F)$, respectively 
$t\in \Sp_{2n}(F)$, for $G=B_n$, respectively $G=C_n$.

We first note that $t$ is cyclic on $V_{\om_1}$. In addition, $(\ep_2-\ep_1)(t)=b_0^2=(\ep_3-\ep_2)(t)$, so $t$ is not strongly regular. 

Consider first the case $G=B_n$ and let $s\in G$ be a semisimple element acting on $V_{\om_1}$ as $t$ does. In particular, $s$ is regular
(since cyclic on $V_{\om_1}$), but  $s$ is not strongly regular.
Now the weights of $V_{\om_1}\oplus V_{\om_n}\oplus \delta_{p,2}V_0$ are $0,\pm\ep_i\ (1\leq i\leq n), \frac{1}{2}(\pm \ep_1\pm\cdots \pm \ep_n)$. Hence, the
remaining claim will follow if we show that the elements $\pm\ep_i(s),1,(\frac{1}{2}(\pm \ep_1\pm\cdots \pm \ep_n)(s)$ are distinct.
As $|t|$ is odd, we may assume that $|s|$ is odd as well. Hence, it suffices to show that
the eigenvalues $2\ep_i(s),1,(\pm \ep_1\pm\cdots \pm \ep_n)(s)$ are distinct. The elements $(\pm \ep_1\pm\cdots \pm \ep_n)(s)$ have the form
$(b_0)^{\pm m}x$, for $m\in\{1,3,7,9\}$ and for some $x\in\{b_1^{\pm1 }\cdots b_{n-3}^{\pm1 }\}$. As $p_i>11$, all these
elements are distinct, and differ from $1,(b_0)^{\pm 2}  , (b_0)^{\pm 6}$, $b_0^{10}$, and $b_1^{\pm2}\ld b_{n-3}^{\pm2}$, whence the result.

The argument for $G=C_ n$ is entirely similar as the weights of $V_{\om_n}$ are $\pm\ep_1\pm\cdots\pm\ep_n$.\end{proof}

\bl{dn3}
Let $G=D_n$, $n\geq 4$. Then there exists  $s\in T_{reg}$ such that $s$ is not almost cyclic on $V_{\om_2}$ (equivalently, $s$ is not strongly regular),
but cyclic on $V_{\om_1}\oplus V_{\om_{n-1}} \oplus V_{\om_n}\oplus V_0$.\end{lemma}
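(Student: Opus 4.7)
The plan is to adapt the construction in Lemma~\ref{bn3} to $D_n$. The key observation is that the multiset of weights of $V_{\om_1}\oplus V_{\om_{n-1}}\oplus V_{\om_n}\oplus V_0$ coincides with that of $V_{\om_1}\oplus V_{\om_n}\oplus V_0$ considered in the $B_n$ case with $p\ne 2$: explicitly, each element of $\{0\}\cup\{\pm\ep_i:1\leq i\leq n\}\cup\{\frac{1}{2}(\pm\ep_1\pm\cdots\pm\ep_n)\}$ occurs with multiplicity one, using that for $D_n$ the natural module $V_{\om_1}$ carries only the weights $\pm\ep_i$ (no zero weight) while $V_{\om_{n-1}}\oplus V_{\om_n}$ carries all the half-spin weights. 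So the construction from Lemma~\ref{bn3} should carry over once interpreted in the torus $T$ of $G=\Spin_{2n}(F)$.

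Concretely, I would fix pairwise distinct primes $p_0,p_1,\ldots,p_{n-3}$, each greater than $11$ and distinct from $\ch(F)$, and choose $b_i\in F^\times$ to be a primitive $p_i$-th root of unity. Then define $s\in T$ by $\ep_1(s)=b_0$, $\ep_2(s)=b_0^3$, $\ep_3(s)=b_0^5$, and $\ep_{j+3}(s)=b_j$ for $1\leq j\leq n-3$. Since the character lattice of $\Spin_{2n}$ contains $\frac{1}{2}(\ep_1+\cdots+\ep_n)$, I additionally need to specify its value on $s$: using that the would-be order of $s$ is odd, there is a unique odd-order element of $F^\times$ whose square is $b_0^9 b_1\cdots b_{n-3}$, and I take this element to be the value of $\frac{1}{2}(\ep_1+\cdots+\ep_n)$ on $s$.

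The three required properties are then verified as follows. For regularity, I check that no value $(\ep_i\pm\ep_j)(s)$ with $i\ne j$ equals $1$, considering the cases $i,j\leq 3$, $i\leq 3<j$, and $4\leq i<j$; coprimality of the orders together with the bound $p_0>11$ rule out all coincidences. For the failure of strong regularity, $(\ep_2-\ep_1)(s)=b_0^2=(\ep_3-\ep_2)(s)$, exactly as in Lemma~\ref{bn3}. For cyclicity on the direct sum, since $|s|$ is odd, squaring is injective on $\lan s\ran$, so it suffices to check that $1$ together with $(\pm 2\ep_i)(s)\in\{b_0^{\pm 2},b_0^{\pm 6},b_0^{\pm 10}\}\cup\{b_j^{\pm 2}:1\leq j\leq n-3\}$ and the elements $(\pm\ep_1\pm\cdots\pm\ep_n)(s)=b_0^m\prod_{j=1}^{n-3}b_j^{\delta_j}$, for $m\in\{\pm 1,\pm 3,\pm 7,\pm 9\}$ and $\delta_j\in\{\pm 1\}$, are pairwise distinct; this follows from coprimality of the $p_k$ together with $p_k>11$, parallel to the argument in Lemma~\ref{bn3}.

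The main obstacle is the extension of the construction from the torus of $\SO_{2n}(F)$ to that of $\Spin_{2n}(F)$, since the fundamental weights $\om_{n-1}$ and $\om_n$ involve the half-spin character $\frac{1}{2}(\ep_1+\cdots+\ep_n)$, which is not in the $\ZZ$-span of $\ep_1,\ldots,\ep_n$. Choosing the $b_i$ of odd prime order (so that $|s|$ is odd) eliminates the square-root ambiguity uniquely, and the remaining bookkeeping then reduces to the same coprime-cyclic-group arithmetic used for $B_n$.
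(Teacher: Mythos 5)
Your proof is correct, but it takes a genuinely different route from the paper's. The paper never redoes the construction inside $\Spin_{2n}$: it observes that $D_n$ sits inside $B_n$ as the subsystem subgroup generated by long root subgroups, takes the very element $s$ of Lemma~\ref{bn3} (which lies in a common maximal torus), and reads everything off by branching --- writing $\eta_i$ for the fundamental weights of $B_n$, one has $V_{\eta_n}|_{D_n}=V_{\om_{n-1}}\oplus V_{\om_n}$ and $V_{\eta_1}|_{D_n}=V_{\om_1}\oplus(1-\delta_{p,2})V_0$, so cyclicity on $V_{\om_1}\oplus V_{\om_{n-1}}\oplus V_{\om_n}\oplus V_0$ transfers verbatim from Lemma~\ref{bn3}; the failure of strong regularity transfers because $\ep_2-\ep_1$ and $\ep_3-\ep_2$ are long roots of $B_n$, hence roots of $D_n$; and the failure of almost cyclicity on $V_{\om_2}$ is then deduced from Lemma~\ref{sa1}, since $\om_a=\om_2$ here. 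You instead carry out the eigenvalue bookkeeping directly in $D_n$, which forces you to confront the half-spin character $\frac{1}{2}(\ep_1+\cdots+\ep_n)$; your resolution --- taking the unique odd-order square root of $b_0^9b_1\cdots b_{n-3}$ --- is precisely the mechanism hidden in the paper's phrase ``we may assume that $|s|$ is odd'' in the proof of Lemma~\ref{bn3}, so the two verifications are morally parallel, applied on opposite sides of the embedding. What your route buys is self-containedness: no branching facts are needed, and the coincidence of weight multisets is made explicit and is valid in all characteristics (the half-spin modules being minuscule, and $V_{\om_1}$ for $D_n$ having no zero weight even for $p=2$); what the paper's route buys is brevity, since all the arithmetic is done once in Lemma~\ref{bn3}. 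Two small remarks: you should cite Lemma~\ref{sa1} for the parenthetical equivalence in the statement, since your three verified properties give the failure of strong regularity but not, by themselves, the assertion about $V_{\om_2}$; and your added hypothesis $p_i\ne{\rm char}(F)$ is a sensible precaution the paper leaves implicit. Your reliance on the bound $p_i>11$ is also sound in this setting: since $n\geq 4$, every half-spin eigenvalue involves some $b_j$ with $j\geq 1$, so no coincidence with a power of $b_0$ can occur, and the remaining exponent differences are not divisible by any prime exceeding $11$.
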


\begin{proof} As $\om_a=\om_2$, by Lemma \ref{sa1}, $s'\in T$ is almost cyclic on $V_{\om_2}$ if and only if $s'$
is strongly regular. We now exhibit the existence of the $s$ as claimed in the statement.

The group  $G=D_n$ is a maximal rank subgroup of  $B_n$ generated by  root subgroups corresponding
to long roots. The element $s$ introduced in Lemma \ref{bn3} is
therefore contained in a subgroup $G=D_n$, and $s$ is regular as an element of $D_n$, since it is regular in $B_n$.
Moreover, we have seen in the proof of Lemma \ref{bn3} that $(\ep_2-\ep_1)(s)=(\ep_3-\ep_2)(s)$;  as $\ep_2-\ep_1$ and
$\ep_3-\ep_2 $ are  long roots of $B_n$, they are roots of the subgroup $D_n$. So $s$ is not strongly regular in $D_n$ and by
Lemma~\ref{sa1}, it follows that $s$
is not almost cyclic on $V_{\om_2}$.
Furthermore,   for the purposes of this proof, let $\eta_1,\dots,\eta_n$ denote the fundamental dominant weights of $B_n$. Then,
the \ir module for $B_n$ with highest weight $\eta_n$ restricts to $D_n$
as $V_{\om_{n-1}}\oplus V_{\om_n}$.  In addition, $V_{\eta_1}|_H = V_{\omega_1} \oplus (1-\delta_{p,2})V_0$.
Now Lemma~\ref{bn3} shows that $s$ is cyclic on $V_{\eta_1}+V_{\eta_n}+\delta_{p,2}V_0$, and so $s$ is cyclic on
$V_{\omega_1}+V_{\omega_{n-1}}+V_{\omega_n}+V_0$, as claimed. \end{proof}

\begin{rem}\label{ex:bdc}{\rm Note that the above two results provide a proof of Theorem~\ref{rr4}(1) for $G=B_n$, $n\geq 3$, $G = D_n$ with $n\geq 4$, and
    $G = C_n$ for $n\geq 3$ and $p=2$.}\end{rem}

\def\rss{regular semisimple }

\bl{sr3} Let $G=C_n $, $n\geq 2$. Then  there exists $s\in T_{reg}$, not strongly regular, with $s$ almost cyclic  on $V_{\om_2}$.\el

\bp Define $s$ by $\ep_1(s)=a$,  $\ep_2(s)=a^3$,  $\ep_i(s)=b_i$, for $a,b_1\ld b_{n-2}\in F^\times$. Then $s$ is not
strongly regular as $(2\ep_1)(s)=a^2=(\ep_2-\ep_1)(s)$, and $(2\ep_1)(s)$, $\ep_2-\ep_1$ are roots of  $G$. Moreover, one may choose the $a,b_i$ such that $s$ is
regular. 
The \eis of $s$ on $V_{\om_2}$ are
$1,a^{\pm 2},a^{\pm 4},a^{\pm 1}b_i^{\pm 1},a^{\pm 3}b_i^{\pm 1},b_i^{\pm 1}b_j^{\pm 1}$, $i\ne j$.   
We may choose $a,b_1\ld b_{n-2}\in F^\times$ such that the  eigenvalues different from $1$ are distinct. Indeed, the non-zero weights of $V_{\om_2}$ are roots and by choosing $s$ regular, we know that the multiplicity of the eigenvalue $1$ is
precisely the multiplicity of the zero weight in $V_{\om_2}$, while all other weights have multiplicity 1.  Hence $s$ is almost
cyclic on $V_{\om_2}$. \end{proof}

\begin{rem}\label{b23}{\rm{A similar argument works for $G=B_n$, with $p=2$; for this case, set $\ep_1(s) = a$ and $\ep_2(s) = a^2$ and the remaining values as for $C_n$.}}\end{rem}

\bl{om4} Let $G=C_n$, $n\geq 2$. Then there exists $s\in T_{reg}$, $s$ not strongly regular, which separates the weights of $V_{\om_k}$,
for $k=1\ld n$.
\el

\bp Let $a,b_i\in F^\times$, for $1\leq i\leq n$, and define $s\in T$ by $\ep_i(s) = b_i$ for $1\leq i\leq n$  and in addition $b_1 = a$ and
$b_2 = a^3$.  We first note that $s$ is not strongly regular as $2\ep_1(s) = (\ep_2-\ep_1)(s)$. We now show that by choosing appropriate values
for the
$b_i$, the element $s$ separates the weights of $V_{\om_k}$, as claimed. This will also show that $s$ is regular as $s$ is then cyclic on $V_{\om_1}$.

The non-zero weights of $V_{\om_k}$ are $\pm\ep_{i_1}\pm...\pm \ep_{i_j}$ for $j\leq k$ with $k-j$ even and $1\leq i_1<...< i_j\leq n$.
This follows from the fact that $V_{\om_j}$ occurs as a composition factor of $\wedge^j(V_{\om_1})$, for all $1\leq j\leq n$, and the subdominant weights of $\om_k$,
for $k\geq 2$, are of the
form $\om_{k-2i}$ for $0\leq i\leq \lfloor \frac{k}{2}\rfloor$. 
A weight $\pm\ep_{i_1}\pm...\pm \ep_{i_j}$ takes the value  $b_{i_1}^{\pm 1}\cdots b_{i_j}^{\pm 1}=xy$,  on $s$, where $x = a^{\pm m}$ for some
$0\leq m\leq 4$ and $y$ is
a product of certain $b_i^{\pm 1}$, with $i>2$. (In particular, if $n=2$ then the weights of $V_{\om_k}$ take values among
$a^{\pm 1},a^{\pm 3},a^{\pm 2},a^{\pm 4},1$ on $s$.)
For a suitable choice of $a$ and $b_i$, these values are distinct.  For example, we can choose $b_i$ to be a primitive $p_i$-th root of unity for
$3\leq i\leq n$  for
primes $p_3>...>p_{n}$,  and $a$ to be a primitive $p_0$-th root of unity for a prime $p_0>{\rm max}\{p_3, 7\}$. Therefore, $s$ separates the
weights of $V_{\om_k}$ for every fixed $k$.\end{proof}

\begin{propo}\label{c43}  Let $G=C_n, n\geq2$, $p\ne 2$. \begin{enumerate}[]
  \item{\rm{(1)}} Let $\om\in\Om_p^+$ be as follows:\begin{enumerate}[]
  \item{\rm {(i)}} $\om=\om_1$,
  \item{\rm{(ii)}} $\om\in\{\om_{n-1},\om_n\}$, $n\geq 3$, $p=3$, or
  \item{\rm{(iii)}} $\om = \om_3$, $n=3$, $p\ne 3$. \end{enumerate}
  Then there exists  $s\in T_{reg}$, with $s$ not
  strongly regular but cyclic on $V_{\om}$.\medbreak
\item{\rm{(2)}} Let $\om\in\Om_p^+$ be one of:\begin{enumerate}[]
  \item{\rm{(i)}} $\om = \om_4$, $n=4$ and $p\ne 3$, or
  \item{\rm {(ii)}} $\om = \om_2$ with $(n,p)\ne(3,3)$.
  \end{enumerate}
 Then there exists  $s\in T_{reg}$, with $s$ not strongly regular but almost cyclic on $V_{\om}$.
  \end{enumerate}

 \end{propo}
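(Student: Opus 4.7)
The plan is to exhibit the required elements $s$ by the explicit construction of Lemma~\ref{om4}, and then read off the eigenvalue multiplicities of $s$ on each $V_\om$ from the weight data provided in Tables~\ref{tab:omega1} and \ref{tab:omega2}.

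I begin by defining $s\in T$ via $\ep_1(s)=a$, $\ep_2(s)=a^3$, and $\ep_i(s)=b_i$ for $3\leq i\leq n$, choosing $a, b_3,\ldots, b_n\in F^\times$ as in the proof of Lemma~\ref{om4} so that $s\in T_{reg}$ and $s$ separates the weights of $V_{\om_k}$ for every $1\leq k\leq n$. By construction $s$ is not strongly regular, since $(2\ep_1)(s)=a^2=(\ep_2-\ep_1)(s)$. Since $s$ separates the weights of each $V_\om$ under consideration, the eigenvalue multiplicities of $\phi(s)$ coincide with the corresponding weight multiplicities. By Theorem~\ref{ag8} together with Tables~\ref{tab:omega1} and \ref{tab:omega2}, the only weight of $V_\om$ whose multiplicity can exceed $1$ is the zero weight.

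For part $(1)$, the task then reduces to verifying that $0$ has multiplicity at most $1$ in $\Om(V_\om)$ for each listed $\om$. When $\om=\om_1$, the weights are $\pm\ep_i$ and $0\notin\Om(V_{\om_1})$, so $s$ is cyclic. When $\om=\om_3$ with $n=3$ and $p\ne 3$, $V_{\om_3}$ is $14$-dimensional and its weights are the six $\pm\ep_i$ together with the eight $\pm\ep_1\pm\ep_2\pm\ep_3$, each of multiplicity $1$, with no zero weight; hence $s$ is cyclic. When $\om\in\{\om_{n-1},\om_n\}$ with $n\geq 3$ and $p=3$, inspection of Table~\ref{tab:omega1} yields the same conclusion.

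For part $(2)$, case $(\mathrm{ii})$, with $\om=\om_2$ and $(n,p)\ne(3,3)$, is settled directly by Lemma~\ref{sr3}. For case $(\mathrm{i})$, $\om=\om_4$ with $n=4$ and $p\ne 3$, the module $V_{\om_4}$ appears in Table~\ref{tab:omega2}, meaning the zero weight has multiplicity strictly greater than $1$ while every nonzero weight has multiplicity $1$; since $s$ separates the weights of $V_{\om_4}$, it follows that $s$ is almost cyclic but not cyclic on $V_{\om_4}$. The principal obstacle is the verification of the zero-weight multiplicity for the small-characteristic modules $V_{\om_{n-1}}$ and $V_{\om_n}$ of $C_n$ at $p=3$, where the weight multiplicities may differ from the characteristic-zero case; this is handled by direct reference to Tables~\ref{tab:omega1} and \ref{tab:omega2}, supplemented if necessary by the sub-dominant-weight machinery of Theorem~\ref{premet} and Lemma~\ref{wtlattice}.
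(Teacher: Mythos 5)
Your proposal is correct and follows essentially the same route as the paper: take the element $s$ constructed in Lemma~\ref{om4}, which is regular, not strongly regular, and separates the weights of each $V_{\om_k}$, and then read the weight multiplicities off Tables~\ref{tab:omega1} and \ref{tab:omega2} to conclude cyclic or almost cyclic as appropriate. The only (harmless) deviation is your appeal to Lemma~\ref{sr3} for the case $\om=\om_2$, which the paper instead subsumes under the same table-lookup argument.
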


\begin{proof} This follows from Lemma \ref{om4} and Tables~\ref{tab:omega1} and \ref{tab:omega2}. Indeed, if $s$ separates the weights of $V$
then $s$ is cyclic on $V$ whenever all weights of $V$ are of \mult 1, and almost cyclic if exactly one weight
is of \mult greater than 1.\enp

\bl{th4} Theorem {\rm \ref{rr4}} is true if G is of classical Lie type.\el

\bp  For $G$ of type $A_n$ this was established in Section~\ref{sec:An}. For Theorem~\ref{rr4}(1), we refer to Remark~\ref{ex:bdc} and note that it remains to consider $G=C_2$ and $G=C_n$ with
$n\geq 3$ and $p\ne 2$. For $G = C_2$, we must establish the existence of $s\in T$, cyclic on $V_{\om_1}$ and on $V_{\om_2}$,
with $s$ not strongly regular. This follows from Lemma~\ref{c22}(2).

For $G = C_n$, $n\geq 3$ and $p\ne 2$, we must establish the existence of $s\in T$, $s$ not strongly regular, with $s$
cyclic on \begin{enumerate}[(a)]
\item $V_{\om_1}$,
\item $V_{\om_3}$ when $n=3$,  and 

\item $V_{\om_{n-1}}$ or $V_{\om_n}$, when $p=3$.
  \end{enumerate}

  (If necessary, we may have different $s$ for each of the modules.) This is precisely the statement of Proposition~\ref{c43}(1).

  Now we turn to the list of modules to be treated for Theorem~\ref{rr4}(2). For $n\geq 3$, $\om=\om_2$, and $G = C_n$, or $G = B_n$ with $p=2$, the result follows from Lemma~\ref{sr3} and Remark~\ref{b23}. For $G = C_4$ with $\om=\om_4$, $p\ne 2,3$, the result follows from Proposition~\ref{c43}(2).\end{proof}

\section{ Groups of exceptional types and Theorem~\ref{td4}}

In this section we will establish all of the main results in case $G$ is of exceptional type and deduce Theorem~\ref{td4}.

\bl{e1e} Theorem {\rm \ref{re1}} holds for  $G$ of exceptional type.\el

\bp Let $\om\in\Om_p^+$ and $s\in T$ be almost cyclic on $V_\om$. Then Theorem~\ref{ag8} and Tables~\ref{tab:omega1} and
\ref{tab:omega2} imply that either $\omega = \omega_a$, or $\omega$ is the highest short root (in case $G=G_2$ or $F_4$), or $\omega$ is
a miniscule weight. The miniscule weights appear in the conclusion of Theorem~\ref{re1}, as well as the highest short roots. Moreover,
Lemma~\ref{sa1} shows that if $s$ is almost cyclic on $V_a$, then either $s$ is strongly regular or $(G,p) = (G_2,3)$ or $(F_4,2)$. The
latter configurations are also in the list of exceptions in the statement of Theorem~\ref{re1}, hence the result.\end{proof}

\bl{e2e} Theorem {\rm \ref{th1}} holds for  $G$  of exceptional type.\el

\bp Let $\om\in\Om_p^+$  and $s\in T$ be cyclic on $V_\om$. Then all weights of $V_\om$ have multiplicity $1$ and now comparing the statement of
Theorem~\ref{th1} with Table~\ref{tab:omega1} gives the result. \enp

We are left with proving Theorem \ref{rr4}; we treat each of the groups in turn.


\begin{propo}\label{df4}
 Let $G$ be of type $F_4$. There exists   $s\in T_{reg}$ such that $s$ is almost cyclic  on $V_{\omega_4}$ but not strongly regular.
  If $p=3$ then $s$ can be chosen to be cyclic on  $V_{\omega_4}$. 
\end{propo}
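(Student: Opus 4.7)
The plan is to construct $s\in T$ explicitly. Since $\Om(V_{\om_4})$ is the union of the $24$ short roots of $F_4$ (each of multiplicity $1$) with the zero weight (of multiplicity $2$ if $p\ne 3$, of multiplicity $1$ if $p=3$), it suffices to find $s\in T$ satisfying: (a) $s$ is regular; (b) $\al(s)=\be(s)$ for some distinct roots $\al,\be$; (c) the $24$ short roots take pairwise distinct values on $s$, all different from $1$. For such $s$, the only possibly repeated eigenvalue of $\phi(s)$ on $V_{\om_4}$ is $1$, with multiplicity equal to that of the zero weight, yielding $s$ cyclic when $p=3$ and almost cyclic otherwise.

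The natural parameterization is by $t_1,t_3,t_4\in F^\times$: define $s\in T$ by $\ep_1(s)=t_1^2$, $\ep_2(s)=t_1^{-4}$, $\ep_3(s)=t_3^2$, $\ep_4(s)=t_4^2$, and $\tfrac{1}{2}(\ep_1+\ep_2+\ep_3+\ep_4)(s)=t_1^{-1}t_3t_4$. The square of this last value equals $\prod_i\ep_i(s)$, so this extends consistently to an element of $T={\rm Hom}(\Om,F^\times)$. By construction $(\ep_1+\ep_2)(s)=t_1^{-2}=(-\ep_1)(s)$, so the distinct roots $\al=\ep_1+\ep_2$ (long) and $\be=-\ep_1$ (short) agree on $s$, establishing (b). The difference $\al-\be=2\ep_1+\ep_2$ has squared length $5$, hence is neither a root of $F_4$ nor a non-zero integer multiple of $\ga_1-\ga_2$ for any pair of distinct short roots $\ga_1,\ga_2$ (any such difference has squared length at most $4$). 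Thus the relation $\al(s)=\be(s)$ does not force any root to evaluate to $1$, nor any two short-root values on $s$ to coincide.

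For (a) and (c), one computes directly the $24$ short-root values on $s$: the eight values $\pm\ep_i(s)$ form the set $\{t_1^{\pm 2},t_1^{\mp 4},t_3^{\pm 2},t_4^{\pm 2}\}$, all with \emph{even} $t_1$-exponent, while the sixteen half-sum short roots $\tfrac{1}{2}(\de_1\ep_1+\de_2\ep_2+\de_3\ep_3+\de_4\ep_4)$, $\de_i\in\{\pm 1\}$, evaluate to $t_1^{\de_1-2\de_2}t_3^{\de_3}t_4^{\de_4}$ with $\de_1-2\de_2\in\{\pm 1,\pm 3\}$ \emph{odd}. So the two families have disjoint $t_1$-exponents and never produce equal eigenvalues for any choice of $t_1,t_3,t_4\in F^\times$; within each family, both pairwise distinctness of the values and the regularity condition $\ga(s)\ne 1$ for $\ga\in\Phi$ reduce to avoiding finitely many monomial equations in $t_1,t_3,t_4$, which is possible over the algebraically closed field $F$. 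Hence for generic $t_1,t_3,t_4$, the element $s$ satisfies (a), (b), (c), completing the proof.

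The main obstacle is choosing the relation in (b) so that (a) and (c) can be simultaneously enforced; this is exactly what the squared-length observation of the second paragraph ensures. One sees why the more obvious choices fail: for two orthogonal long roots (such as $\ep_1+\ep_2$ and $\ep_3+\ep_4$) the difference is twice a short root $\ga$, which forces $\ga(s)^2=1$ and hence $\ga(s)=-1$ by regularity, with $(-\ga)(s)=-1$ as well, producing an unavoidable second repeated eigenvalue on $V_{\om_4}$ and preventing $s$ from being cyclic when $p=3$.
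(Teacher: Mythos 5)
Your construction is correct, and it takes a genuinely different route from the paper. The paper restricts $V_{\om_4}$ to the long-root subsystem subgroup $H\cong B_4$, identifies the nontrivial composition factors as the natural and spin modules of $H$, and then imports wholesale the element built in Lemma~\ref{bn3} (a product of roots of unity of distinct large primes, cyclic on $V_{\eta_1}\oplus V_{\eta_4}$), concluding regularity in $F_4$ via Theorem~\ref{c99}. You instead work directly in the $F_4$ weight lattice: you impose one explicit failure of strong regularity, $(\ep_1+\ep_2)(s)=(-\ep_1)(s)$, observe that for generic parameters the kernel of $s$ on the weight lattice is exactly $\ZZ(2\ep_1+\ep_2)$, and use the length bound (a root has squared length at most $2$, a difference of distinct short roots at most $4$, while nonzero multiples of $2\ep_1+\ep_2$ have squared length at least $5$) to see that this kernel contains no root and separates no pair of short roots; a genericity argument on the irreducible variety $(F^\times)^3$ then finishes. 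The two families of short-root values you describe (even versus odd $t_1$-exponent) are of course precisely the weights of the $B_4$ natural and spin modules, so the underlying combinatorics is the same; what your version buys is self-containment (no appeal to Lemma~\ref{bn3} or to Theorem~\ref{c99} for regularity) and a transparent explanation, via the squared-length computation, of \emph{why} a long-root--equals--short-root coincidence is the right relation to impose, whereas the paper's version buys reusability (the same $B_4$ element also serves for $D_n$, $E_6$ and $E_7$). One small caveat: your claim that the two families ``never produce equal eigenvalues for any choice of $t_1,t_3,t_4$'' is an overstatement over $\overline{\mathbb{F}_p}$, where all elements are roots of unity; the parity of the $t_1$-exponent only shows the relevant monomials are distinct as monomials, i.e.\ that each cross-family coincidence is a nontrivial monomial equation. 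Since that is exactly what your concluding ``avoid finitely many nontrivial monomial equations on an irreducible variety'' step requires, the argument is unaffected.
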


\begin{proof} Let $V = V_{\om_4}$. Recall that $G$ contains a subsystem subgroup $H$ of type $B_4$, generated by the long root subgroups of $G$, so  $T\leq H$.
For the purposes of this proof, let $\eta_i$, $1\leq i\leq 4$ denote the fundamental dominant weights of $H=B_4$ (with respect to an appropriate choice of
  Borel subgroup of $H$).
Then the non-zero  weights of $V|_H$
are exactly the non-zero weights  of the $FH$-module $V_{\eta_1}+V_{\eta_4}$. By comparison of the dimensions we conclude that the non-trivial
composition factors of  $V|_H$  are $V_{\eta_1}$ and $ V_{\eta_4}$, and  there are  $(1-\delta_{p,3})(1+\delta_{p,2})$ trivial composition factors.

Now let $s\in H$ be as in Lemma \ref{bn3}; then,  $\alpha(s) = \beta(s)$, for $\alpha$ and $\beta$ distinct roots of $H$, so
$s$ is not strongly regular in $F_4$. Moreover,  $s$ is
cyclic  on $V_{\eta_1}+V_{\eta_4}$ and so almost cyclic
on $V$.   Then  Theorem~\ref{c99} implies that $s$ is regular in $G$. If $p=3$, then $V|_H = V_{\eta_1}+V_{\eta_4}$ and Lemma \ref{bn3} shows that $s$ is cyclic on $V$.
\end{proof}
\begin{rem}\label{nr2}{\rm Let $G=F_4$, $p=2$. \begin{enumerate}
  \item There exists an element $s\in T_{reg}$, $s$ not strongly regular, which is almost
  cyclic  on $V_a=V_{\om_1}$ (and not almost cyclic on $V_{\om_4}$). Recall that,
  identifying $T$ with $(F^\times)^4$ and $\ep_i$ the $i$-th coordinate function for $1\leq i\leq 4$, the roots of $F_4$ are
  $\pm\ep_i, \frac{1}{2}(\pm\ep_1\pm\ep_2\pm\ep_3\pm\ep_4)$ (short) and
  $\pm\ep_i\pm\ep_j$, $1\leq i<j\leq 4$ (long).
  Let $a_i\in F^\times$ be a primitive $p_i$-root of unity, where $p_1=3,p_2=5,p_3=7$. Define $s\in T$ by $\ep_1(s)=a_1,$ $\ep_2(s)=a_2,$
  $\ep_3(s)=a_1a_2,$ $\ep_4(s)=a_3$.
The non-zero weights  of $V_a$ are $\pm \ep_i\pm\ep_j$ $(1\leq i<j\leq 4)$ so $s$ is almost cyclic on $V_a$. Then
$(\ep_ 1+\ep_2-\ep_3+\ep_4)(s)=(-\ep_ 1-\ep_2+\ep_3+\ep_4)(s)=\ep_4(s)=a_3$ so $s$ is not almost cyclic  on $V_{\om_4}$. Note that $s$ is regular as
$\al(s)\neq 1$ for every root $\al$, but it is not strongly regular.
This justifies one of the claims of Remark \ref{rr3}, and together with Proposition~\ref{df4}, completes the proof of
Theorem~\ref{rr4} for the group $G = F_4$.


  \item There exists a non-regular element $s\in T$ such that $V^s_a=V_a^T$. Indeed, let $\ep_i(s)=t_i\in F^\times$, where $t_1=1$. Then $s$ is not regular. The
  weights of $V_a=V_{\om_1}$ are $0,\pm\ep_i\pm\ep_j$, so the \eis of $s$ on the non-zero weights of $V_a$ are $1,(\pm\ep_i\pm\ep_j)(s)$, that is,
  $1,t_i^{\pm 1},t_i^{\pm 1}t_j^{\pm 1}$,
 where $ i,j=2,3,4, i\neq j$. For an appropriate choice of $t_i\in F^\times$, these \eis differ from 1,
 and the claim follows. This proves a claim in Remark~\ref{rr3} for the group $G = F_4$.\end{enumerate}}\end{rem}

\begin{propo}\label{th1_e6}
  Let $G=E_6$. Then there exists $s\in T$, not strongly regular, which is cyclic on $V_{\om_1}$ and on $V_{\om_6}$.
\end{propo}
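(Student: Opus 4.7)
The plan is to reduce first to a single module and then exploit a Levi subgroup containing an element produced by an earlier lemma. Since $V_{\omega_6}\cong V_{\omega_1}^{*}$, and the multiplicity of an eigenvalue $\lambda$ of a semisimple element on a module equals the multiplicity of $\lambda^{-1}$ on the dual module, $s$ is cyclic on $V_{\omega_6}$ if and only if $s$ is cyclic on $V_{\omega_1}$. Thus it suffices to construct $s\in T$ that is not strongly regular yet cyclic on $V_{\omega_1}$.

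Let $L=L'\cdot Z$ be the Levi subgroup of $G$ obtained by deleting an end node of the Dynkin diagram, so that $L'$ is of type $D_5$ and $Z$ is a one-dimensional central torus, with $T=T_{L'}\cdot Z$. A standard branching computation gives
\[
V_{\omega_1}\big|_{L}\;=\;W_{1}\oplus W_{2}\oplus W_{3},
\]
where $W_{1}$ is a half-spin $L'$-module of dimension $16$, $W_{2}$ is the natural $L'$-module of dimension $10$, and $W_{3}$ is the trivial module; moreover, the three summands carry three pairwise distinct characters $\chi_{1},\chi_{2},\chi_{3}$ of $Z$. The distinctness of the $\chi_{i}$ follows from weight arithmetic: the $L$-module $V_{\omega_1}$ is the direct sum of its three weight-components for the grading induced by $Z$, and the highest weights of $W_{1},W_{2},W_{3}$ sit in three different cosets of the $\mathbb{Z}$-span of $\Phi(L')$ inside the weight lattice of $G$.

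Next, I apply Lemma~\ref{dn3} with $n=5$ to produce $s_{0}\in T_{L'}$ which is regular and not strongly regular in $L'$, and is cyclic on $V_{\omega_{1}}^{L'}\oplus V_{\omega_{5}}^{L'}\oplus V_{0}$. Since every root of $L'$ is a root of $G$, the relation $\alpha(s_{0})=\beta(s_{0})$ for some distinct $\alpha,\beta\in\Phi(L')$ shows that $s_{0}$ is also not strongly regular in $G$. Now set $s:=s_{0}z$ for some $z\in Z$: since $Z$ lies in the kernel of every root of $L'$, we still have $\alpha(s)=\beta(s)$, so $s$ is not strongly regular in $G$ for any choice of $z$.

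Finally, I choose $z$ generically so that $s$ is cyclic on $V_{\omega_{1}}$. The eigenvalues of $s$ on $W_{i}$ are $\lambda\,\chi_{i}(z)$ where $\lambda$ ranges over the eigenvalues of $s_{0}$ on $W_{i}$; by the cyclicity of $s_{0}$ on $W_{1}\oplus W_{2}\oplus W_{3}$, these $\lambda$ are distinct within each $W_{i}$. The possible coincidences across summands $W_{i}$ and $W_{j}$, $i\ne j$, are governed by finitely many equations of the form $\chi_{i}(z)/\chi_{j}(z)=c$ with $c\in F^{\times}$ fixed; because $\chi_{i}\ne\chi_{j}$, each such equation defines a proper closed subvariety of $Z$. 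A generic $z\in Z$ avoids their union, yielding an element $s$ as required. The main technical point to verify in writing this up is the pairwise distinctness of $\chi_{1},\chi_{2},\chi_{3}$; once that is in hand, the remaining steps are a direct genericity argument.
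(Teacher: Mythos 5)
Your proof is correct and follows essentially the same route as the paper: restrict the $27$-dimensional module to a $D_5$ subgroup, where it decomposes as natural $\oplus$ half-spin $\oplus$ trivial, and invoke Lemma~\ref{dn3}. The only difference is that your final genericity argument over the central torus $Z$ is superfluous, since Lemma~\ref{dn3} already supplies an element cyclic on the \emph{direct sum} $V_{\eta_1}\oplus V_{\eta_4}\oplus V_{\eta_5}\oplus V_0$, i.e.\ with no eigenvalue coincidences across the summands, so one may simply take $z=1$.
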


\begin{proof} It suffices to consider $V=V_{\om_1}$. Let $H<G$ be a subsystem subgroup of type $D_5$.
  For the purposes of this proof, let $\eta_i$, for $1\leq i\leq 5$, denote the fundamental dominant weights of $H$ (with respect to an appropriate choice of
  maximal torus $T_H$ and Borel subgroup of $H$).
  Then, we have that $V|_H = V_{\eta_1}\oplus V_{\eta_4}\oplus V_0$. (Here we have made a choice of the labeling of the roots of $H$.)

  Now let $s\in H$ be as in Lemma~\ref{dn3}. Then $s$ is cyclic on $V_{\eta_1}+V_{\eta_4}+V_{\eta_5}+V_0$, so in particular on $V$.
  As well, $s$ is not strongly regular in $H$ and since roots of $H$ are roots of $G$, $s$ is not strongly regular in $G$. \end{proof}

\begin{propo}\label{e66}
  Let $G$ be of type $E_6$. Then there exists  $s\in T_{reg}$ such that $s$ is not strongly regular but almost cyclic  on $V_{\omega_1}$ and on
  $V_{\omega_6}$. 
\end{propo}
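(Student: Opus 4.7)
The plan is to observe that Proposition~\ref{e66} is a direct consequence of Proposition~\ref{th1_e6}. Indeed, that proposition produces $s\in T$ which is not strongly regular and is cyclic on both $V_{\omega_1}$ and $V_{\omega_6}$. By Lemma~\ref{ss2}(1), the cyclicity of $s$ on the non-trivial module $V_{\omega_1}$ forces $s$ to be regular, so $s\in T_{reg}$. Since any cyclic matrix is in particular almost cyclic, the very same element $s$ fulfills the conclusion of Proposition~\ref{e66}.

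If one preferred an independent construction (to avoid invoking Proposition~\ref{th1_e6}), the natural route would be to embed a subsystem subgroup $H$ of type $D_5$ in $G=E_6$ and to use the restrictions $V_{\omega_1}|_H = V_{\eta_1}\oplus V_{\eta_4}\oplus V_0$ and $V_{\omega_6}|_H = V_{\eta_1}\oplus V_{\eta_5}\oplus V_0$ (up to duality), as in the proof of Proposition~\ref{th1_e6}. Lemma~\ref{dn3} provides an $s\in T_H\subseteq T$ which is regular in $D_5$, not strongly regular there (with $(\varepsilon_2-\varepsilon_1)(s)=(\varepsilon_3-\varepsilon_2)(s)$, a pair of $D_5$-roots which remain roots of $E_6$), and cyclic on $V_{\eta_1}\oplus V_{\eta_4}\oplus V_{\eta_5}\oplus V_0$. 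Thus $s$ is cyclic, and in particular almost cyclic, on both $V_{\omega_1}$ and $V_{\omega_6}$; regularity in $E_6$ then follows from Lemma~\ref{ss2}(1), and the failure of strong regularity is inherited from $D_5$.

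I do not anticipate any substantial obstacle: the content of Proposition~\ref{e66} is strictly weaker than that of Proposition~\ref{th1_e6}, and the only small point to verify is that ``cyclic on a non-trivial module'' implies membership in $T_{reg}$, which is precisely the content of Lemma~\ref{ss2}(1).
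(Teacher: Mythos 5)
Your first paragraph is a correct proof of the statement as written: cyclic implies almost cyclic by Definition~\ref{def:ac}, and Lemma~\ref{ss2}(1) upgrades the conclusion $s\in T$ of Proposition~\ref{th1_e6} to $s\in T_{reg}$, so the element of Proposition~\ref{th1_e6} satisfies Proposition~\ref{e66}. This is, however, a genuinely different route from the paper's. The paper does not deduce Proposition~\ref{e66} from Proposition~\ref{th1_e6}; instead it restricts to a subsystem subgroup $H$ of type $F_4$ (not $D_5$), takes the element $s$ of Proposition~\ref{df4}, uses the decomposition $V_{\om_1}|_H = V_{\eta_4}\oplus(1+\delta_{p,3})V_0$ to get almost cyclicity on $V_{\om_1}$, verifies regularity of $s$ in $E_6$ by computing the multiplicity of the eigenvalue $1$ on $V_a=V_{\om_2}$, and establishes failure of strong regularity via Lemma~\ref{sa1} by showing $s$ is not almost cyclic on $V_a$. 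The payoff of that extra work is an element which is almost cyclic but \emph{not} cyclic on $V_{\om_1}$ (the eigenvalue $1$ occurs with multiplicity $3$), i.e.\ a genuinely new example beyond the cyclic one already produced by Proposition~\ref{th1_e6}; your argument, while strictly sufficient for the stated claim and for the $E_6$ case of Theorem~\ref{rr4}(2), only reproduces the cyclic example. Your alternative second construction via $D_5$ and Lemma~\ref{dn3} is essentially a restatement of the paper's proof of Proposition~\ref{th1_e6} and adds nothing beyond your first paragraph.
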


\begin{proof} It suffices to establish the result for $V_{\om_1}$. Let $H\leq G$ be of type $F_4$. Let $s$ be as constructed in the proof of
  Proposition~\ref{df4}. For the purposes of this proof, let $\eta_i$, $1\leq i\leq 4$ denote the fundamental dominant weights of $H$
  (with respect to a fixed choice of Borel subgroup and maximal torus). Then $V_{\om_1}|_H = V_{\eta_4}\oplus(1+\delta_{p,3})V_0$. Since $s$ is almost
  cyclic on $V_{\eta_4}$, and cyclic if $p=3$,  and the zero weight has multiplicity 2 in $V_{\eta_4}$ if $p\ne 3$, we see that $s$ is almost cyclic on $V_{\om_1}$.
  We claim that $s$ is regular but not strongly regular. Recall that $V_a = V_{\omega_2}$. We note that

  $$V_a|_H = \begin{cases}V_{\eta_1}\oplus V_{\eta_4},& \mbox{if } p\ne 2,3;\cr
        V_{\eta_1}\oplus V_{\eta_4}\oplus V_0,&\mbox{if } p=3;\cr
        V_{\eta_1}\oplus V_{\eta_4}\oplus V_{\eta_4},&\mbox{if } p=2.\end{cases}$$

      To see that $s$ is regular, we need to show that the multiplicity of the eigenvalue $1$ for $s$ on $V_a$ is equal to $\dim T = 6$.
      Note that $s$ is regular in $H$ and so the eigenvalue $1$ on $V_{\eta_1}$ has multiplicity $4-2\delta_{p,2}$.
      Also, the non-zero weights of $V_{\eta_4}$ are roots of $H$ and the zero weight has multiplicity $2-\delta_{p,3}$ in $V_{\eta_4}$; so again the eigenvalue 1 has multiplicity $2-\delta_{p,3}$ in $V_{\eta_4}$. In each of the above decompositions, we find that the eigenvalue 1 for $s$ on $V_a$ has multiplicity 6.

      Finally, to see that $s$ is not strongly regular, we appeal to Lemma~\ref{sa1}. 
      It suffices to see that $s$ is not almost cyclic on $V_a$. Recall that $\eta_4\prec\eta_1$. Using Theorem~\ref{premet} and the above decompositions of $V_a|_H$, we find  that all non-zero weights of
      $V_{\eta_4}$ occur with multiplicity 2 in $V_a$, whence the result.\end{proof}

We will use the following result when treating the group $G=E_7$.

\begin{propo}\label{aa7}
  Let $G=A_n$, $n\geq 4$. Then there exists $s\in T_{reg}$, $s$ not strongly regular,  such that $s$ is not almost cyclic on  $V_{\omega_1+\omega_n}$ and cyclic
  on $V=V_{\omega_2}\oplus V_{\omega_{n-1}}$.
   \end{propo}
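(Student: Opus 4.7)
The plan is to construct $s \in T$ explicitly as a diagonal matrix $s = \diag(c_1,\ldots,c_{n+1}) \in \SL_{n+1}(F)$ with $\prod c_i = 1$, so that $\ep_i(s) = c_i$ and the roots of $G$ take the values $c_i/c_j$ on $s$. I would impose a single nontrivial relation, namely $c_1^2 = c_3 c_4$, which forces $(\ep_1-\ep_3)(s) = (\ep_4-\ep_1)(s)$, so $s$ is not strongly regular. Concretely, introduce independent parameters $t,u,b_5,\ldots,b_n \in F^\times$ and set $c_1 = t$, $c_2 = t^{-1}$, $c_3 = t^2 u^{-1}$, $c_4 = u$, $c_i = b_i$ for $5\le i\le n$, and $c_{n+1} = (c_1\cdots c_n)^{-1}$, so that $s \in \SL_{n+1}(F)$.

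Next I would check that for a generic choice of the parameters the $c_i$ are pairwise distinct, hence $s$ is regular. The first main consequence is that $s$ is \emph{not} almost cyclic on $V_{\om_1+\om_n}$. Indeed, the non-zero weights of $V_{\om_1+\om_n}$ are precisely the roots $\ep_i-\ep_j$, each of multiplicity $1$, and the zero weight has multiplicity $n$ if $p \nmid n+1$ and $n-1$ if $p \mid n+1$; in either case this is at least $n-1 \ge 3$ since $n \ge 4$. Thus the eigenvalue $1$ of $s$ on $V_{\om_1+\om_n}$ has multiplicity at least $3$, while the relation $c_1^2 = c_3 c_4$ forces the eigenvalue $c_1/c_3$ to come from both weights $\ep_1-\ep_3$ and $\ep_4-\ep_1$, hence to have multiplicity at least $2$. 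With two distinct eigenvalues of multiplicity greater than $1$, the element $s$ is not almost cyclic on $V_{\om_1+\om_n}$.

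The second main task is to verify that $s$ is cyclic on $V = V_{\om_2} \oplus V_{\om_{n-1}}$. The eigenvalues of $s$ on $V$ are $c_a c_b$ and $(c_a c_b)^{-1}$ for $1 \le a < b \le n+1$, a total of $n(n+1) = \dim V$ values. Cyclicity is equivalent to the distinctness of all these values, which amounts to a finite list of Laurent-monomial inequalities in the parameters $t,u,b_5,\ldots,b_n$: namely $c_a c_b \ne c_{a'} c_{b'}$ and $c_a c_b c_{a'} c_{b'} \ne 1$ for distinct unordered pairs $\{a,b\} \ne \{a',b'\}$. Each such identity defines a proper closed subvariety of $(F^\times)^{n-1}$, so it suffices to show that each is \emph{not} forced by the single imposed relation $c_1^2 = c_3 c_4$; since $F$ is algebraically closed (hence infinite), one may then choose parameters outside the finite union of these subvarieties.

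The main obstacle is precisely this last genericity verification: one must inspect the (finitely many) potential coincidences among the eigenvalues on $V_{\om_2} \oplus V_{\om_{n-1}}$ and confirm that the relation $c_1^2 = c_3 c_4$ does not force any of them. Once this is established by a case-by-case check on the indices $\{a,b\},\{a',b'\}$---together with the observation that the only monomial identity imposed lives in $V_{\om_1+\om_n}$ (it concerns differences $c_i/c_j$, not products $c_i c_j$)---the construction yields the required element $s$.
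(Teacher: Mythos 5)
Your overall strategy is the right one and is essentially the paper's: impose a single monomial relation of the form $c_1^2=c_ic_j$ on a diagonal element to create a repeated root value (hence failure of strong regularity and of almost cyclicity on $V_{\om_1+\om_n}$, whose zero weight has multiplicity $\geq n-1\geq 3$), and then argue that this relation does not force any coincidence among the eigenvalues $c_ac_b$, $(c_ac_b)^{-1}$ on $V_{\om_2}\oplus V_{\om_{n-1}}$. However, your concrete parametrization is broken. By setting $c_2=t^{-1}=c_1^{-1}$ you impose a \emph{second} relation, $c_1c_2=1$, and this one does live in $V$: the weight $\ep_1+\ep_2$ of $V_{\om_2}$ and the weight $-(\ep_1+\ep_2)$ of $V_{\om_{n-1}}$ then both give the eigenvalue $1$, so every element of your family has an eigenvalue of multiplicity at least $2$ on $V$ and is never cyclic there. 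No choice of $t,u,b_5,\ld b_n$ escapes this, since the coincidence holds identically on the family. This is exactly the kind of forced coincidence your final ``genericity verification'' is supposed to rule out, and it cannot be ruled out for the family as written; moreover that verification, which you correctly identify as the crux, is left undone.

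The repair is to drop the constraint on $c_2$ and let it be a free parameter, so that your family is the full hypersurface $Y=\{s=\diag(c_1,\ldots,c_{n+1})\in T: c_1^2=c_3c_4\}$, of dimension $n-1$. The paper then handles the genericity step more efficiently than a case-by-case monomial check: $Y$ is an irreducible closed subvariety of $T$ of codimension $1$, so if $Y$ were contained in the union of the kernels $\ker(\lam-\mu)$ over distinct weights $\lam,\mu$ of $V$, it would have to \emph{equal} $\ker(\lam-\mu)^\circ$ for a single pair; one then only needs to observe that the defining character $2\ep_1-\ep_3-\ep_4$ is not (proportional to) any difference of weights of $V$, all of which have the form $\ep_i+\ep_j\pm(\ep_k+\ep_\ell)$, $\ep_j-\ep_\ell$, $2\ep_i+\ep_j+\ep_\ell$ or $2(\ep_i+\ep_j)$. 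A generic point of $Y$ is then cyclic on $V$ (hence regular, by Lemma~\ref{ss2}), not strongly regular, and not almost cyclic on $V_{\om_1+\om_n}$. With the extra relation removed and the genericity argument actually carried out (by this dimension count or by your proposed exhaustive check), your proof goes through.
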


   \begin{proof}  We start by considering a certain subvariety of $F^m$. Let $m\geq 4$. Set
     $X = \{(x_1,\dots,x_{m})\in F^m \ |\ x_1\cdots x_m=1\}$. Then $X$ is isomorphic to the principal open subset $D(f)\subset F^{m-1}$
     defined by the function $f\in F[T_1,\dots,T_{m-1}]$, $f = T_1\cdots T_{m-1}$. Indeed, the associated
     $F$-algebra of $X$ is isomorphic to $F[T_1,\dots,T_{m-1}, (T_1\cdots T_{m-1})^{-1}]$ which is (isomorphic to) the localization of
     $F[T_1,\dots,T_{m-1}]$ with respect to the multiplicative set $S = \{(T_1\cdots T_{m-1})^\ell\ |\ \ell\geq 0\}$. In particular,
     $X$ is irreducible of dimension $m-1$.
 Now consider the closed subset $Y$ of $X$, $Y = \{(x_1,\dots,x_m)\in X\ |\ x_1^2 = x_2x_3\}$. Then the associated affine $F$-algebra is
 $F[X]/(T_1^2-T_2T_3)$. Since $F[X]$ is a localization of $F[T_1,\dots,T_{m-1}]$ and $T_1^2-T_2T_3$ is an irreducible polynomial
 function in $F[T_1,\dots T_{m-1}]$, $Y$ is also an irreducible subvariety of $F^m$, of dimension $m-2$.

 We now apply the above reasoning to a maximal torus of $G={\rm SL}_{n+1}(F)$, namely the usual torus of diagonal matrices of
 determinant 1. Set $s = {\rm diag}(d_1,\dots,d_{n+1})\in T$, where $d_i\in F^\times$ and
 $\prod_{i=1}^{n+1} d_i = 1$. If we assume further that $d_1^2 = d_2d_3$ then $(\ep_1-\ep_2)(s) = (\ep_3-\ep_1)(s)$ and so $s$ is not
 almost cyclic on $V_{\omega_1+\omega_n}$.
   Let $Y = \{s\in T\ |\ d_1^2=d_2d_3\}$ so that by the
   discussion of the first paragraph, $Y$ is an irreducible closed subvariety of $T$ of dimension $n-1$.
   The module $V$ has $1$-dimensional $T$-weight spaces. For weights $\lambda,\mu\in\Om(V)$, with $\lambda\ne\mu$, let $K_{\lambda\mu} = \ker(\lambda-\mu)$. The set
   $K:=\cup_{\lambda,\mu}K_{\lambda\mu}$ is a proper closed subset of $T$. Since $Y$ is irreducible, if $Y\subseteq K$,
   then there exist $\lambda,\mu\in\Omega(V)$, with $\lambda\ne\mu$, such that $Y\subseteq \ker(\lambda-\mu)^\circ$ and dimension considerations show that
   these two sets must be equal. But the weights of $V$ are of the form $\pm(\ep_i+\ep_j)$, $1\leq i<j\leq n+1$. So the possible weight
   differences
   $\lambda-\mu$ (up to a sign) are $\ep_i+\ep_j\pm(\ep_k+\ep_\ell)$, $\ep_j-\ep_\ell$, $2\ep_i+\ep_j+\ep_\ell$ and $2(\ep_i+\ep_j)$, for
   $i,j,k,\ell$ distinct. So there are no distinct weights $\lambda,\mu$ with $Y=\ker(\lambda-\mu)^\circ$. Hence $Y$ does not lie in $K$ and the elements in
   $Y\setminus K$ are almost cyclic, indeed cyclic, on $V$ and not almost cyclic on $V_{\omega_1+\omega_n}$. So these elements are regular by Lemma \ref{ss2}
   and not strongly regular by Lemma~\ref{sa1}.\end{proof}

 We now use the preceding result to establish the following

     \begin{propo}\label{ea7}
      Let $G=E_7$. There exists an element $s\in T_{reg}$, with $s$ not strongly regular, and such that $s$ is cyclic
       on $V_{\omega_7}$.
        \end{propo}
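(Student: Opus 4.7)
The plan is to exploit a subsystem subgroup of type $A_7$ inside $E_7$. Recall that $E_7$ contains a maximal rank subsystem subgroup $H$ of type $A_7$ (obtained from the extended Dynkin diagram by removing the appropriate node). Since ${\rm rank}(H) = {\rm rank}(G) = 7$, we can arrange $H$ to contain our fixed maximal torus $T$. Moreover, the restriction of the $56$-dimensional minuscule module $V_{\om_7}$ to $H$ is well-known to be $V_{\om_2}\oplus V_{\om_6}$ (that is, $\Lambda^2(W)\oplus \Lambda^2(W)^*$ where $W$ is the natural $8$-dimensional module for $A_7$). This holds in all characteristics, since $V_{\om_7}$ is minuscule.

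With this restriction in hand, I would apply Proposition \ref{aa7} with $n=7$, which yields an element $s\in (T_H)_{reg}$ that is not strongly regular in $H$ and cyclic on the $H$-module $V_{\om_2}\oplus V_{\om_6}$. Viewing $s$ as an element of $T\subset G$, the cyclicity of $s$ on $V_{\om_7}|_H = V_{\om_2}\oplus V_{\om_6}$ immediately gives that $s$ is cyclic on $V_{\om_7}$ as a $G$-module. Then Lemma \ref{ss2}(1) forces $s$ to be regular in $G$.

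It remains to verify that $s$ is not strongly regular in $G$. The non-strong-regularity of $s$ in $H$ means that $\al(s) = \be(s)$ for some distinct $H$-roots $\al,\be\in\Phi(H)$. Since $H$ is a subsystem subgroup of $G$, we have $\Phi(H)\subseteq\Phi(G)$, so $\al,\be$ are also distinct $T$-roots of $G$, and hence $s$ is not strongly regular in $G$.

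The only potential obstacle is confirming the restriction formula $V_{\om_7}|_H = V_{\om_2}\oplus V_{\om_6}$ in arbitrary characteristic, but this follows from the minuscule nature of $\om_7$ together with a standard weight-count via the embedding of root systems (both summands on the right are also minuscule for $A_7$, and the dimension $28+28=56$ matches). No further case analysis on $p$ is needed.
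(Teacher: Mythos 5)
Your proposal is correct and follows essentially the same route as the paper: the paper likewise takes a maximal rank subsystem subgroup $H$ of type $A_7$, uses the restriction $V_{\om_7}|_H = V_{\eta_2}+V_{\eta_6}$, invokes Proposition~\ref{aa7} to produce the element, and concludes regularity via Lemma~\ref{ss2} and non-strong-regularity from $\Phi(H)\subseteq\Phi(G)$. The only cosmetic difference is that the paper records the restriction as a sum of composition factors rather than a direct sum, which is immaterial here since cyclicity depends only on the multiset of $T$-weights.
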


        \begin{proof} Set $V = V_{\om_7}$. Let $H\leq G$ be a maximal rank subgroup of type $A_7$ and for the purposes of this proof,
          let $\eta_i,1\leq i\leq 7$, denote the fundamental dominant weights of $H$ (with respect to an appropriate choice of maximal torus $T_H$ and Borel
          subgroup). Then $V|_H = V_{\eta_2}+V_{\eta_6}$. 
          Let $s\in T_H$,  be as in Proposition~\ref{aa7}, so that $s$ is cyclic on $V$ and not strongly regular in $H$. Then $s$ is not strongly regular
          in $G$ as the roots of $H$ are roots of $G$. Finally, Lemma~\ref{ss2} shows that $s$ is regular.\end{proof}

      For the proof of Theorem~\ref{rr4}, it remains to treat the group $G=G_2$. We investigate in detail the two modules corresponding to the fundamental dominant
      weights.

\begin{propo}\label{2g3} Let $G=G_2$, $V_i=V_{\om_i}$ for $i=1,2$.\begin{enumerate}[]

  \item{\rm {(1)}} Let $s\in T_{reg}$ and suppose that $s$ is not cyclic on $V_1$. Then $p\neq 2$  and the \eis of $s$   on
    $V_1$ are $\{1,-1,-1,-b,b\up, b,-b\up\}$,  where $b\in F^\times$, $b^2\neq 1$. In particular, $s$ is  almost cyclic  on $V_1$ \ii $b^2\neq  -1$.
 \item{\rm{(2)}} Let $s\in T_{reg}$ and assume that $p=3$.  Then  $s$ is almost cyclic  on $V_2$ and cyclic   on $V_1$. More precisely, if
$s$ is not cyclic on $V_2$ then, for some $a\in F^\times, a^{4}\neq 1$, the \eis of $s$   on $V_2$ are
$\{1,-1,-1, a^{\pm3},-a^{\pm3}\}$,  and those 
on $V_1$ are
$\{1,a^{\pm1}, -a^{\pm1},$ $-(a^{\pm2})\}$.

 \item{\rm {(3)}} There exists  $s\in T_{reg}$, not strongly regular, with $s$ cyclic on $V_1$.
 \item{\rm {(4)}} Let $p=3$. Then there exists  $s\in T_{reg}$, not strongly regular, with $s$ cyclic on $V_2$.
 \item{\rm{(5)}} Let $p=3$. Then there exists a non-regular element $s\in T$,  with $V_a^s = V_a^T$.\end{enumerate}
\end{propo}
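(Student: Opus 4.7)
The plan is to introduce torus coordinates $x=\alpha_1(s)$ and $y=(\alpha_1+\alpha_2)(s)$, so that the six short roots take values $x^{\pm1},y^{\pm1},(xy)^{\pm1}$ on $s$ and the six long roots take values $(y/x)^{\pm1},(x^2y)^{\pm1},(xy^2)^{\pm1}$. The weights of $V_1=V_{\omega_1}$ are $0$ and the six short roots, each of multiplicity one. In characteristic~$3$, $V_2=V_{\omega_2}$ has dimension seven, and since the $W$-orbit of $\omega_2$ is the set of long roots, its weights are forced to be $0$ and the six long roots, again each of multiplicity one. All five parts of the proposition reduce to tracking collisions in these two eigenvalue lists under the regularity condition $\alpha(s)\ne 1$ for every root $\alpha$.

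For (1), a collision of two distinct short-root eigenvalues of a regular $s$ gives $(\alpha-\beta)(s)=1$ for distinct short roots $\alpha\ne\beta$. In $G_2$ every nonzero difference of distinct short roots is either itself a root (ruled out by regularity) or of the form $\pm 2\gamma$ for $\gamma$ a short root; only the latter can occur, so some short root $\alpha$ satisfies $\alpha(s)=-1$, forcing $p\ne 2$. Using $W$-transitivity on short roots I conjugate to $x=-1$, at which point the eigenvalue list becomes $\{1,-1,-1,\pm y,\pm y^{-1}\}$; writing $b:=y$ gives the form claimed, regularity is $b^2\ne 1$, and the sole remaining possible collision $y=-y^{-1}$ is exactly $b^2=-1$, which yields the almost-cyclic criterion.

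Part (2) is the parallel argument for $V_2$ and the long-root system: if $s$ is regular but not cyclic on $V_2$, then $\beta(s)=-1$ for some long root $\beta$, and after a Weyl conjugation (long roots form a single orbit) I may take $\alpha_2(s)=-1$, i.e.\ $y=-x$. Setting $a:=x$ and substituting produces exactly the two eigenvalue lists stated. The regularity conditions $a\ne\pm 1$, $a^2\ne-1$, $a^3\ne\pm 1$ collapse to the single condition $a^4\ne 1$ in characteristic~$3$, since the identity $x^3\pm 1=(x\pm 1)^3$ makes $a^3=\pm 1\iff a=\pm 1$, already forbidden once $a^4\ne 1$. A direct inspection under this hypothesis shows that the seven values in the $V_1$-list are distinct while exactly $-1$ is doubled in the $V_2$-list; establishing this collapse is the main technical step of the whole proof.

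Parts (3)--(5) I settle by exhibiting elements in the $(x,y)$-coordinates. For (3), take $(x,y)=(a,-a)$ with $a^4\ne 1$ and $a^6\ne 1$: by the computation of (2) the element lies in $T_{reg}$ and is cyclic on $V_1$, while $\alpha_2(s)=-1$ shows it is not strongly regular. For (4), in characteristic~$3$, take $(x,y)=(-1,y)$ with $y\ne\pm 1$ and $y^2\ne-1$: regularity is immediate, $s$ fails to be strongly regular because $\alpha_1(s)=(-\alpha_1)(s)=-1$, and expanding the $V_2$-eigenvalue list gives $\{1,\pm y^{\pm 1},-y^{\pm 2}\}$, whose seven entries are distinct by direct inspection (using that $y^3=\pm 1\iff y=\pm 1$ in characteristic~$3$). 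For (5), again in characteristic~$3$, take $(x,y)=(1,y)$ with $y\ne\pm 1$: then $s$ is non-regular since $\alpha_1(s)=1$, while the long-root values are $y,y,y^2$ and their inverses, none equal to $1$, so the eigenvalue $1$ of $s$ on $V_a=V_{\omega_2}$ is contributed solely by the zero weight line, giving $V_a^s=V_a^T$ as required.
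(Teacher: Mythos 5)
Your approach is essentially the paper's: put everything in torus coordinates, use the explicit lists of short and long roots, and track eigenvalue collisions subject to the regularity constraint $\al(s)\neq 1$. Your coordinates $(x,y)=(\al_1(s),(\al_1+\al_2)(s))$ are just a relabelling of the paper's $(a,b)=(\al_1(s),\al_2(s))$, and your treatment of (1), (4) and (5), as well as the eigenvalue lists and the reduction of the regularity conditions to $a^4\neq 1$ in (2), all agree with the paper's computations.

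There are, however, two points to repair. The genuine gap is in (3): the element $(x,y)=(a,-a)$ fails in characteristic $2$, where $-a=a$ forces $\al_2(s)=y/x=1$, so $s$ is not even regular. Part (3) carries no restriction on $p$, and it is needed for $p=2$ since it supplies the witness for Theorem~\ref{rr4}(1) in the case $(G_2,\om_1)$ in all characteristics. The paper instead takes $\al_1(s)=\al_2(s)=a$ with $a$ of multiplicative order at least $7$: the $V_1$-eigenvalues are then $a^{\pm 1},a^{\pm 2},a^{\pm 3}$ (together with $1$ if $p\neq 2$), which are distinct, and $\al_1(s)=\al_2(s)$ witnesses the failure of strong regularity uniformly in $p$. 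A secondary, smaller issue is that your part (2) is not literally ``the parallel argument'' to (1): the difference of two distinct long roots of $G_2$ can be three times a short root (for instance $\al_2-(3\al_1+\al_2)=-3\al_1$), which is neither a root nor twice a long root, so your dichotomy for short-root differences does not carry over. These collisions must be excluded using $p=3$: if $(3\gamma)(s)=1$ then $\gamma(s)^3=1$, hence $\gamma(s)=1$, contradicting regularity. You clearly have this identity in hand, since you invoke $t^3=\pm 1\iff t=\pm 1$ to collapse the regularity conditions, but as written the reduction to $\al_2(s)=-1$ is asserted rather than justified; the paper spells out exactly this step.
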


\begin{proof} Set $\al_1(s)=a$, $\al_2(s)=b$, for some $a,b\in F^\times$.

(1) Here we suppose $s$ to be regular and not cyclic on $V_1$. The weights of $V_1$ are the short roots, and the weight 0 if $p\ne 2$, all with multiplicity 1.
Then $(\al_1+\al_2)(s)=ab$,
$(2\al_1+\al_2)(s)=a^2b$ so the eigenvalues of $s$ on $V_1$ are $1$ (if $p\ne 2$), $a^{\pm1},(ab)^{\pm1},(a^2b)^{\pm1}$. Since $s$ is not cyclic on $V_1$, at least
two of the eigenvalues are equal. As $s$ is regular, $\al(s)\neq 1$ for every root $\al$, in particular,
$1\notin \{a,ab,a^2b\}$.
As $W(G)$ is transitive on the short roots, we can assume that $a\in\{a\up, (ab)^{\pm1},(a^2b)^{\pm1}\}$. Now if 
$a\in \{(ab)^{\pm1},(a^2b)^{\pm1}\}$, then $1\in \{b,a^2b,ab,a^3b\}$. But $\al_2,2\al_1+\al_2$, $\al_1+\al_2,3\al_1+\al_2$ are roots, so
this contradicts  $s$ being regular.
So we have $a=a \up$. Then  $a^{2}=1$ and $a=\al_1(s)\neq 1$, so  $p\neq2 $ and $a=-1$. Then the \eis of $s$ on $V_1$ are
$1,-1,-1, -b,-b\up, b, b\up$, where $b\neq \pm 1$ as $s$ is regular, establishing the first claim. Finally, 
$s$ is almost cyclic  on $V_1$ if and only if $b^2\ne-1$.   

 (2) Here we suppose $s$ to be regular and   $p=3$.  The weights of $V_2$ are the long roots and the weight 0,
 so the \eis of $s$ on $V_2$ are $1,b^{\pm 1}, (a^3b)^{\pm 1},(a^3b^2)^{\pm 1}$. Note that $s$ regular implies that $b\ne 1$.
 As above, if $s$ is not cyclic on $V_2$,  we can assume that
$\al_2(s)\in \{-\al_2(s), \pm(3\al_1+\al_2)(s),\pm(3\al_1+2\al_2)(s)\}$. As $s$ is regular,  $\al_2(s)\neq (-3\al_1-\al_2)(s)$ and
$\al_2(s)\neq (3\al_1+2\al_2)(s)$.
If $\al_2(s)=(3\al_1+\al_2)(s)$ then $(3\al_1)(s) =1$, so $a^3=1$, and if  $\al_2(s)=(-3\al_1-2\al_2)(s)$, then $(ab)^3=1$. But since
$p=3$, this implies that $a=1$, respectively $ab=1$, contradicting that $s$ is regular. So these two cases are ruled out, and we are
left with the case where $\al_2(s)=-\al_2(s)$.  So $b^2=1$, and hence $b=-1$, and $a^2\ne 1$ since $s$ is regular.

We now have that the \eis of $s$ on $V_2$ are
$\{1,-1,-1, a^3,-a^3, a^{-3},-a^{-3}\}$. Suppose for a contradiction that $s$ is not almost cyclic on $V_2$; then either $a^3 = -1$ or $a^6 = -1$. In the first case,
$a = -1$, and hence
$ab=1$ contradicting $s$ regular. Thus $a^6=-1$,
equivalently, $a^2=-1$. But then, $(2\al_1+\al_2)(s)=a^2b=1$, again contradicting that $s$ is regular. Hence $s$ is almost cyclic on $V_2$ as claimed. Note that $a^2\ne -1$, else $(2\alpha_1+\alpha_2)(s) = 1$. 
   Now the \eis of $s$ on $V_1$ are $ \{1,a,a\up, -a,-a\up, -a^{2},-a^{-2}\}$, and hence $s$ is cyclic on $V_{1}$.

   (3)  Let  $a=b$, so that the \eis of $s$ on $V_1$ are $\{a^{\pm 3},a^{\pm 2},a^{\pm 1}$, $1\}$, where 1 is to be dropped if $p=2$;
   they are distinct if $a^k\neq 1$ for $k\leq 6$; for instance choose $a$ to be a primitive $7$-th root of unity. Then $s$ is cyclic
   on $V_{1}$ (hence regular) and is not strongly regular   as $\al_1(s)=\alpha_2(s)=a$.

   (4) Take again $a=b$, so  that the eigenvalues of $s$ on $V_2$ are $a^{\pm1}, a^{\pm4}, a^{\pm 5}, 1$. Then choosing $a$ appropriately,
   we see that $s$ is cyclic on $V_2$, regular and not strongly regular, as in (3).

   The final statement (5) is straightforward, as we may take $a=1$ so that $s$ is not regular, while the non-zero weights of $V_a$ are
   $\pm(3\alpha_1+2\alpha_2)$, $\pm(3\alpha_1+\alpha_2)$ and $\pm\alpha_2$; so that choosing $b$ such that $b^2\ne 1$,
   we have $V_a^s = V_a^T$.\end{proof}

 We have now completed the proof of Theorem~\ref{rr4} for the exceptional groups. Finally, we conclude the article by proving Theorem~\ref{td4}:

\begin{proof}[Proof of Theorem~\ref{td4}] By Lemma \ref{td2},
$\phi_i(s)$ is cyclic for every $i$. Therefore, by Theorem~\ref{th1}, $s$ is strongly regular unless, possibly, for every $i$, $(G,\lam_i)$ is as in cases
$(2)-(10)$ there. Also, Theorem~\ref{ag8} implies that all non-zero weights of $V_\om$ have multiplicity one, and we then apply  \cite[Theorem 2(2)]{TZ2} and \cite[Proposition 2]{SZ1} to  justify the conditions given in (i) - (iii) of the statement.\end{proof}

\section{Tables}
Tables~\ref{tab:omega1} and \ref{tab:omega2} are taken from \cite[Tables 1,2]{TZ2}. 
We recall here our convention for reading the tables in case ${\rm char}(F)=0$: 
for a natural number $a$ the expressions $p>a$,
$p\geq a$ or $p\ne a$ are to be interpreted as the absence of any restriction, that is, $a$ is allowed to be any natural number.
Note further that when a weight $\lambda$ has coefficients expressed in terms of $p$, we are assuming that ${\rm char}(F) = p>0$.

\begin{table}[h]
$$\begin{array}{|l|c|}\hline
~~~~~~~~~~G&  \\
\hline
\hline
A_1&a\om_1, 1\leq a<p\\
\hline
A_n, n>1&a\om_1, b\om_n, \ 1\leq a,b<p\\
& \om_i,\  1<i< n\\
&c\om_i+(p-1-c)\om_{i+1}, \  1\leq i<n,\  0\leq c<p \\
\hline
B_n, n>2 &\om_1,\ \om_n \\
\hline
C_n, n>1, p=2&\om_1,\ \om_n\\
\hline 
C_2,p>2&\om_1,\ \om_2,\ 
\om_1+\frac{p-3}{2}\om_2,\ \frac{p-1}{2}\om_2\\
 \hline
C_3&\omega_3\\
\hline
C_n,n>2,p>2&\om_1,\ \om_{n-1}+\frac{p-3}{2}\om_n,\ \frac{p-1}{2}\om_n\\
 \hline
D_n, n>3&\om_1,\ \om_{n-1},\  \om_n\\
 \hline
E_6&\om_1, \ \om_6\\
\hline
E_7&\om_7  \\
\hline
F_4, p=3&\om_4 \\
\hline
G_2, p\neq 3&\om_1\\
\hline 
G_2, p= 3&\om_1,\ \om_2\\
\hline
\end{array}$$
\caption{Non-trivial irreducible $p$-restricted $G$-modules with all weights of
multiplicity 1}\label{tab:omega1}
\end{table}

\newpage${}$

\begin{table}[H]
$$\begin{array}{|l|l|c|c|}
\hline
G& \mbox {conditions}& &\mbox{weight } 0 \mbox{ \mult}\\
\hline
\hline
A_n,&n>1,(n,p)\neq (2,3)&\om_1+\om_n& \begin{cases} n-1&\mbox{ if }p|(n+1)\\ n&\mbox{ if } p\!\not|(n+1)\end{cases}\\
&&& \\
A_3& p>3&2\om_2& 2     \\

 \hline
B_n &n>2,p\ne 2& \om_2&  n \\
&n>2,p= 2& \om_2&  n-{\rm gcd}(2,n) \\
&&&\\
&-&2\om_1&\begin{cases}n&\mbox{ if } p|(2n+1)\\  n+1&\mbox{ if } p\!\not|(2n+1)\end{cases}   \\
\hline
C_n&n>1&2\om_1 &n\\
& n>2, (n,p)\ne (3,3)&\om_2& \begin{cases}n-2& \mbox{ if } p|n\\  n-1& \mbox{ if } p\!\not|n\end{cases}\\
C_2&p\ne5&2\om_2&2\\
C_4&p\ne 2,3&\omega_4&2\\
\hline
D_n&n>3&2\om_1&  \begin{cases}n-2 &\mbox{ if } p|n\\  n-1& \mbox{ if }
p\!\not|\,n\end{cases}\\
 &n>3, p\ne 2& \om_2& n\\
 D_n&n>3, p=2& \om_2&n-\gcd(2,n) \\
\hline
E_6&&\om_2&\begin{cases}5 &\mbox{ if } p=3\\  6& \mbox{ if } p\neq 3\end{cases} \\
\hline 
E_7&&\om_1 &\begin{cases}6 &\mbox{ if } p=2\\  7& \mbox{ if } p\ne2\end{cases}\\
 \hline
E_8&&\om_8 & 8\\
\hline
F_4&&\om_1 &\begin{cases}2& \mbox{ if } p=2\\  4 &\mbox{ if } p\ne2\end{cases}   \\
 & p\neq 3&\om_4 &2 \\
\hline
G_2& p\neq 3&\om_2&2\\
\hline
\end{array}$$
\caption{Irreducible $p$-restricted $G$-modules with non-zero
weights of multiplicity 1 and whose zero weight has \mult 
greater than $1$.}\label{tab:omega2}
\end{table}

\begin{table}[H]
$$\begin{array}{|l|l|l|l|l|l|l|}
    \hline
    &&&&&&\\
    A_1&A_n &B_n &C_2  & C_n& C_3&C_n  \\
    p\ne 2 &n\geq 2&n\geq 3&&n\geq 3, p=2&p\ne 2,3&n\geq 3, p=3\\
    &&&&&&\\
\hline
    \om_1&\om_1\ld \om_n  & \om_1,\om_n  &\om_1,\om_n   & \om_1,\om _n &\om_1,\om_3&\om_1,\om_{n-1},\om_n     \\
    \hline\hline
    &&&&&&\\
   D_n, n\geq 4&  E_6  &E_7  & F_4,p= 3  &G_2, p\neq 3 & G_2, p=3& \\
    &&&&&&\\
    \hline

   \om_1,\om_{n-1},\om_n&\om_1,\om_6& \om_7&  \om_4  &   \om_1& \om_1,\om_2& \\
                                                 \hline
  \end{array}$$
  \caption{Highest weights of the non-trivial $p$-restricted modules in cases $(2)-(10)$ of Theorem~\ref{th1}}\label{tab:results}

\end{table}

\end{document}